\theoremstyle{plain}
\newtheorem{theorem}{Theorem}[section]
\newaliascnt{lemma}{theorem}
\newaliascnt{proposition}{theorem}
\newtheorem{proposition}[proposition]{Proposition}
\newaliascnt{corollary}{theorem}
\newtheorem{corollary}[corollary]{Corollary}
\theoremstyle{definition}
\newaliascnt{definition}{theorem}
\theoremstyle{remark}
\newaliascnt{remark}{theorem}
\newtheorem{remark}[remark]{Remark}
\newaliascnt{example}{theorem}
\begin{document}
\title[Constraint-preserving hybrid methods]{Constraint-preserving
  hybrid finite element methods for Maxwell's equations}

\author{Yakov Berchenko-Kogan}
\author{Ari Stern}
\address{Department of Mathematics and Statistics,
  Washington University in St.~Louis}
\email{yasha@wustl.edu}
\email{stern@wustl.edu}

\begin{abstract}
  Maxwell's equations describe the evolution of electromagnetic
  fields, together with constraints on the divergence of the magnetic
  and electric flux densities. These constraints correspond to
  fundamental physical laws: the nonexistence of magnetic monopoles
  and the conservation of charge, respectively.  However, one or both
  of these constraints may be violated when one applies a finite
  element method to discretize in space. This is a well-known and
  longstanding problem in computational electromagnetics.
    
  We use domain decomposition to construct a family of primal hybrid
  finite element methods for Maxwell's equations, where the Lagrange
  multipliers are shown to correspond to a numerical trace of the
  magnetic field and a numerical flux of the electric flux
  density. Expressing the charge-conservation constraint in terms of
  this numerical flux, we show that both constraints are strongly
  preserved. As a special case, these methods include a hybridized
  version of N\'ed\'elec's method, implying that it preserves the
  constraints more strongly than previously recognized. These
  constraint-preserving properties are illustrated using numerical
  experiments in both the time domain and frequency
  domain. Additionally, we observe a superconvergence phenomenon,
  where hybrid post-processing yields an improved estimate of the
  magnetic field.
\end{abstract}

\subjclass[2010]{78M10, 65M60, 65N30}

\maketitle

\section{Introduction}

Maxwell's equations consist of two vector evolution equations,
together with two scalar constraint equations,
$ \operatorname{div} B = 0 $ and $ \operatorname{div} {D} = \rho $,
where $B$ is magnetic flux density, $D$ is electric flux density, and
$\rho$ is charge density. These constraints are automatically
preserved by the evolution, so given initial conditions satisfying the
constraints, one can simply evolve forward in time without needing to
``enforce'' the constraints in any way.

However, if one applies a finite element method in space, then the
semidiscretized evolution equations no longer necessarily preserve
these constraints, at least not strongly. \citet{Nedelec1980} showed
that, if one uses curl-conforming edge elements for the electric field
$E$ and divergence-conforming face elements for $B$, then the
semidiscretized equations preserve $ \operatorname{div} B = 0 $
strongly. On the other hand, $ \operatorname{div} {D} = \rho $ holds
only in the Galerkin sense (i.e., when both sides are integrated
against certain continuous, piecewise-polynomial test
functions). Observing this, \citet{ChWi2006} commented that strong
preservation of both divergence constraints ``appears to be necessary
for many applications in electromagnetics,'' and \citet{HoPeSc2004}
call this ``one of the main difficulties in the numerical solution of
Maxwell's equations.'' For this reason, alternative approaches have
been developed that enforce the constraints strongly---for instance,
using Lagrange multipliers \citep{AsDeHeRaSe1993,ChDuZo2000}---instead
of attempting to preserve them automatically but weakly, as
N\'ed\'elec's method does. In cases where $ \rho = 0 $, another idea
is to use divergence-free elements to construct nonconforming methods
\citep{BrLiSu2007,BrLiSu2009} or discontinuous Galerkin methods
\citep{CoLiSh2004,HoPeSc2004,BrLiSu2008}.

In this paper, we attack the problem of constraint preservation from a
different perspective. We perform domain decomposition of the
Lagrangian (i.e., primal) variational principle for Maxwell's
equations, in terms of the vector potential $A$ and scalar potential
$\varphi$, using Lagrange multipliers $ \widehat{ H } $ and
$ \widehat{ {D} } $ to enforce inter-element continuity and boundary
conditions. These Lagrange multipliers are shown to correspond to
boundary traces of the magnetic field $H$ and electric flux density
$D$. After using gauge symmetry to fix $ \varphi = 0 $, we show that
the evolution of $ ( A, \widehat{ H } ) $ automatically preserves the
constraints $ \operatorname{div} B = 0 $ and
$ \operatorname{div} \widehat{ {D} } = \rho $. Finally, we
semidiscretize this domain-decomposed variational principle, obtaining
primal hybrid finite element methods that preserve this formulation of
the constraints in a strong sense. As a special case, we give a
hybridized formulation of N\'ed\'elec's method, implying that it
preserves the constraints in a stronger sense than previously
recognized.

The paper is organized as follows:
\begin{itemize}
\item In \autoref{sec:maxwell}, we review Maxwell's equations, the
  Lagrangian variational principle, and semidiscretization using edge
  elements.

\item In \autoref{sec:dd}, we domain decompose the Lagrangian
  variational principle, relate solutions to the classical
  (non-domain-decomposed) formulation of Maxwell's equations, and
  study the domain-decomposed version of the constraints and their
  preservation.

\item In \autoref{sec:hybrid}, we consider primal hybrid finite
  element methods for semidiscretizing the domain-decomposed evolution
  equations, showing that constraints are preserved in a strong sense.

\item Finally, in \autoref{sec:numerical} we conduct numerical
  experiments demonstrating the behavior of the hybridized N\'ed\'elec
  method. In addition to the constraints being preserved to machine
  precision, these results illustrate a superconvergence phenomenon
  for the post-processed magnetic field $ \widehat{ H } _h $, similar
  to that observed for other hybridized mixed methods
  (cf.~\citet{ArBr1985,BrDoMa1985}).
\end{itemize}

\section{Maxwell's equations}
\label{sec:maxwell}

\subsection{Maxwell's equations}
\label{sec:maxwell_intro}

We begin by reviewing the classical formulation of Maxwell's
equations, first in terms of the electric and magnetic fields and flux
densities, and then in terms of the vector and scalar potentials. We
postpone the discussion of regularity until the introduction of the
weak formulation, in \autoref{sec:weak}; for the moment, everything
may be assumed to be smooth.

\subsubsection{Standard formulation}

In their most familiar form, Maxwell's equations consist of the vector
evolution equations,
\begin{subequations}
  \label{eqn:maxwell_evolution}
  \begin{align}
    \dot{ B } &= - \operatorname{curl} E, \label{eqn:maxwell_faraday}\\
    \dot{ {D} } + J &= \operatorname{curl} H  \label{eqn:maxwell_ampere},
  \end{align}
\end{subequations}
together with the scalar constraint equations,
\begin{subequations}
  \label{eqn:maxwell_constraints}
  \begin{align}
    \operatorname{div} B &= 0, \label{eqn:maxwell_divB}\\
    \operatorname{div} {D} &= \rho . \label{eqn:maxwell_divD}
  \end{align}
\end{subequations}
Here, $E$ and $H$ denote the electric field and magnetic field,
$ {D} = \epsilon E $ and $ B = \mu H $ denote the electric flux
density and magnetic flux density, $\epsilon$ and $\mu$ are the
electric permittivity and magnetic permeability tensors, and $J$ and
$\rho$ are current density and charge density, respectively. We use
the ``dot'' notation $ \dot{ u } \coloneqq \partial _t u $ to denote
partial differentiation with respect to time.

The evolution equations \eqref{eqn:maxwell_evolution} automatically
preserve the constraints \eqref{eqn:maxwell_constraints}. Indeed,
taking the divergence of \eqref{eqn:maxwell_faraday} implies
$ \operatorname{div} \dot{ B } = 0 $, so \eqref{eqn:maxwell_divB} is
preserved. Similarly, taking the divergence of
\eqref{eqn:maxwell_ampere} implies
$ \operatorname{div} \dot{ {D} } + \operatorname{div} J = 0 $, so
\eqref{eqn:maxwell_divD} is preserved if and only if $J$ and $\rho$
satisfy $ \dot{ \rho } + \operatorname{div} J = 0 $, which is the law
of conservation of charge. We refer to \eqref{eqn:maxwell_divD} as the
\emph{charge-conservation constraint}, since it is equivalent to this
condition.

\subsubsection{Formulation in terms of potentials}

Alternatively, Maxwell's equations may be expressed in terms of a
vector field $A$, called the \emph{vector potential}, and a scalar
field $\varphi$, called the \emph{scalar potential}. Given $A$ and
$\varphi$, we define the electric field and magnetic flux density by
\begin{equation*}
  E \coloneqq - ( \dot{ A } + \operatorname{grad} \varphi ) , \qquad B \coloneqq \operatorname{curl} A .
\end{equation*}
Note that \eqref{eqn:maxwell_faraday} and \eqref{eqn:maxwell_divB} are
automatically satisfied, so we may restrict our attention entirely to
the single evolution equation \eqref{eqn:maxwell_ampere}, which we
have already seen preserves \eqref{eqn:maxwell_divD}.

However, Maxwell's equations do not uniquely determine the evolution
of $ ( A, \varphi ) $. Observe that if $ \xi $ is any time-dependent
scalar field, then the transformation
$ ( A, \varphi ) \mapsto ( A + \operatorname{grad} \xi, \varphi -
\dot{ \xi } ) $ leaves $ E $, $B$, $D$, $H$ unchanged. Such
transformations are called \emph{gauge transformations}, and the
invariance of Maxwell's equations under gauge transformations is
called \emph{gauge symmetry}. In particular, any solution
$ ( A, \varphi ) $ may be transformed into one of the form
$ ( A + \operatorname{grad} \xi, 0 ) $ by taking $\xi$ to be a
solution of $ \dot{ \xi } = \varphi $. Therefore, we may restrict our
attention to solutions with $ \varphi = 0 $.

\begin{remark}
  \label{rmk:gauge}
  This procedure of restricting to particular solutions, which are
  related to a general solution by some gauge transformation, is
  called \emph{gauge fixing}. The choice $ \varphi = 0 $, called
  \emph{temporal gauge}, is the most convenient for our purposes, but
  there are other choices as well. Note that there is still some
  remaining gauge symmetry, even after performing temporal gauge
  fixing: we may transform $ A \mapsto A + \operatorname{grad} \xi $
  for any $\xi$ constant in time.
\end{remark}

After temporal gauge fixing, we can write \eqref{eqn:maxwell_ampere}
as either a first-order system in $A$, $D$,
\begin{equation*}
  \dot{ A } = - \epsilon ^{-1} {D} , \qquad \dot{ {D} } + J = \operatorname{curl} ( \mu ^{-1} \operatorname{curl} A ) ,
\end{equation*}
or as a second-order equation in $A$ alone,
\begin{equation*}
  - \partial _t ( \epsilon \dot{ A } ) + J = \operatorname{curl} ( \mu ^{-1} \operatorname{curl} A ) .
\end{equation*}
In the special case where $\epsilon$ and $\mu$ are simply positive
constants with $ \epsilon \mu = 1 $ (as in vacuum, with units chosen
so that the speed of light is $1$) and $ J = 0 $, the latter equation
just becomes
\begin{equation*}
  \ddot{ A } + \operatorname{curl} \operatorname{curl} A = 0 .
\end{equation*}
Taking the Fourier transform with respect to time (the so-called
\emph{frequency domain} or \emph{time-harmonic} approach), this latter
equation transforms into the eigenvalue problem for the
$ \operatorname{curl} \operatorname{curl} $ operator.

\subsection{Weak formulation}
\label{sec:weak}

We next discuss the weak formulation of Maxwell's equations, first
using a Lagrangian variational principle in terms of the potentials
$A$ and $\varphi$, and then fixing the temporal gauge $ \varphi = 0 $
to arrive at a weak formulation in terms of $A$ alone.

\subsubsection{Function spaces and regularity}
\label{sec:regularity}
Let $ \Omega \subset \mathbb{R}^3 $ be a bounded Lipschitz domain, and
define the function spaces
\begin{align*}
  H ^1 (\Omega) &\coloneqq \bigl\{ u \in L ^2 (\Omega) : \operatorname{grad} u \in L ^2 ( \Omega , \mathbb{R}^3  ) \bigr\} ,\\
  H ( \operatorname{curl} ; \Omega ) &\coloneqq \bigl\{ u \in L ^2 ( \Omega , \mathbb{R}^3  ) : \operatorname{curl} u \in L ^2 ( \Omega , \mathbb{R}^3  ) \bigr\} ,\\
  H ( \operatorname{div} ; \Omega ) &\coloneqq \bigl\{ u \in L ^2 ( \Omega ; \mathbb{R}^3  ) : \operatorname{div} u \in L ^2 (\Omega) \bigr\} .
\end{align*}
We also define the following subspaces, with boundary conditions imposed:
\begin{align*}
  \mathring{ H } ^1 (\Omega) &\coloneqq \bigl\{ u \in H ^1 (\Omega) : u \rvert _{ \partial \Omega } = 0 \bigr\} ,\\
  \mathring{ H } ( \operatorname{curl} ; \Omega ) &\coloneqq \bigl\{ u \in H ( \operatorname{curl} ; \Omega ) : u \times \mathbf{n} \rvert _{ \partial \Omega } = 0 \bigr\} ,\\
  \mathring{ H } ( \operatorname{div} ; \Omega ) &\coloneqq \bigl\{ u \in H ( \operatorname{div}; \Omega ) : u \cdot \mathbf{n} \rvert _{ \partial \Omega } = 0 \bigr\} .
\end{align*}
Here, $ \mathbf{n} \rvert _{ \partial \Omega } $ denotes the outer
unit normal to $ \partial \Omega $, and restrictions to
$ \partial \Omega $ are interpreted in the trace sense.

Let $ A \colon t \mapsto A (t) $ be a $ C ^1 $ curve in
$ \mathring{ H } ( \operatorname{curl} ; \Omega ) $ and
$ \varphi \colon t \mapsto \varphi (t) $ be a $ C ^0 $ curve in
$ \mathring{ H } ^1 (\Omega) $. It follows that $E$ is a $ C ^0 $
curve in $ \mathring{ H } ( \operatorname{curl} ; \Omega ) $, that
$ B $ is a $ C ^1 $ curve in
$ \mathring{ H } ( \operatorname{div}; \Omega ) $, and that
\eqref{eqn:maxwell_faraday} and \eqref{eqn:maxwell_divB} hold strongly
in $ L ^2 $. We also assume that both
$ \epsilon = \epsilon _{ i j } ( x, t ) $ and
$ \mu = \mu _{ i j } ( x, t ) $ are $ L ^\infty $, symmetric, and
uniformly elliptic. In particular, this implies that $ {D} $ and $H$
are both $ C ^0 $ curves in $ L ^2 ( \Omega , \mathbb{R}^3 )
$. Henceforth, we restrict our attention to $ ( A, \varphi ) $ such
that $ {D} $ is in fact a $ C ^1 $ curve in
$ L ^2 ( \Omega , \mathbb{R}^3 ) $.

Finally, let the current density $J$ be a given $ C ^0 $ curve in
$ H ( \operatorname{div} ; \Omega ) $ and the charge density $\rho$ be
a given $ C ^1 $ curve in $ L ^2 (\Omega) $, satisfying the charge
conservation condition $ \dot{ \rho } + \operatorname{div} J = 0 $.

\subsubsection{The Lagrangian and Euler--Lagrange equations}

For $ ( A, \varphi ) $ as above, define the Lagrangian
\begin{equation*}
  L ( A, \varphi , \dot{ A } , \dot{ \varphi } ) \coloneqq \int _\Omega \biggl( \frac{1}{2} E \cdot {D} - \frac{1}{2} B \cdot H + A \cdot J - \varphi \rho \biggr) .
\end{equation*}
The Euler--Lagrange equations are
\begin{subequations}
  \label{eqn:weak_maxwell}
\begin{alignat}{2}
  \int _\Omega \bigl( A ^\prime \cdot ( \dot{ {D} } + J ) - \operatorname{curl} A ^\prime \cdot H \bigr) &= 0 , &\qquad \forall A ^\prime &\in \mathring{ H } ( \operatorname{curl} ; \Omega ) ,\label{eqn:weak_ampere}\\
  \int _\Omega ( \operatorname{grad} \varphi ^\prime \cdot {D} +
  \varphi ^\prime \rho ) &= 0, & \forall \varphi ^\prime &\in
  \mathring{ H } ^1 (\Omega) ,\label{eqn:weak_divD}
\end{alignat}
\end{subequations}
which are weak expressions of \eqref{eqn:maxwell_ampere} and
\eqref{eqn:maxwell_divD}, respectively.

These Euler--Lagrange equations imply that solutions have additional
regularity properties. Since
$ \operatorname{curl} H = \dot{ {D} } + J $ is $ C ^0 $ in $ L ^2 $,
we have that $ H $ is $ C ^0 $ in
$ H ( \operatorname{curl} ; \Omega ) $. Likewise, since
$ \operatorname{div} {D} = \rho $ is $ C ^1 $ in $ L ^2 $, we have
that $ {D} $ is $ C ^1 $ in $ H ( \operatorname{div}; \Omega )
$. Hence, solutions to this weak problem are in fact strong solutions
of Maxwell's equations.

\begin{remark}
  When $\epsilon$ and $\mu$ are constant in time, the electric and
  magnetic fields have precisely the same regularity assumed by
  \citet[eqs. (7)--(8)]{Monk1993}, namely: $E$ is $ C ^1 $ in
  $ L ^2 ( \Omega , \mathbb{R}^3 ) $ and $ C ^0 $ in
  $ \mathring{ H } ( \operatorname{curl} ; \Omega ) $, while $H$ is
  $ C ^1 $ in $ L ^2 ( \Omega , \mathbb{R}^3 ) $ and $ C ^0 $ in
  $ H ( \operatorname{curl}; \Omega ) $.
\end{remark}

As in \autoref{sec:maxwell_intro}, this formulation is symmetric with
respect to gauge transformations
$ ( A, \varphi ) \mapsto ( A + \operatorname{grad} \xi, \varphi -
\dot{ \xi } ) $, where $\xi$ is now an arbitrary $ C ^1 $ curve in
$ \mathring{ H } ^1 (\Omega) $. Fixing the temporal gauge
$ \varphi = 0 $, the Lagrangian becomes
\begin{equation*}
  L ( A, \dot{ A } ) = \int _\Omega \biggl( \frac{1}{2} E \cdot {D} - \frac{1}{2} B \cdot H + A \cdot J \biggr),
\end{equation*} 
and the Euler--Lagrange equations are just
\eqref{eqn:weak_ampere}. This again implies that $ H $ is $ C ^0 $ in
$ H ( \operatorname{curl}; \Omega ) $, so \eqref{eqn:maxwell_ampere}
holds strongly.  By the same argument as in
\autoref{sec:maxwell_intro}, this automatically preserves the
charge-conservation constraint.

\begin{remark}
  Preservation of the charge-conservation constraint may also be seen
  as a consequence of the remaining gauge symmetry
  $ A \mapsto A + \operatorname{grad} \xi $, mentioned in
  \autoref{rmk:gauge}, where $ \xi \in \mathring{ H } ^1 (\Omega) $ is
  constant in time. This is a particular instance of Noether's
  theorem, which relates symmetries to conservation laws.  See
  \citet[Section 1.6]{MaRa1999} for an account of the $ J = 0 $ case,
  as well as the discussion in \citet{ChWi2006}.
\end{remark}

\subsection{Galerkin semidiscretization using N\'ed\'elec elements}
\label{sec:nedelec}

The use of finite elements in computational electromagnetics is a
broad topic with a long history, and we do not attempt to give a full
account here. We refer the reader to the texts by \citet{Monk2003} and
\citet{Jin2014}, as well as the excellent survey article by
\citet{Hiptmair2002}, which relates these methods to the more recent
theory of finite element spaces of differential forms. In this
section, we briefly review the semidiscretization of Maxwell's
equations using the elements of \citet{Nedelec1980,Nedelec1986}, an
approach that was subsequently analyzed in a series of papers by
\citet{Monk1991,Monk1992,Monk1993}.

Galerkin semidiscretization of the variational problem
\eqref{eqn:weak_ampere} restricts the trial and test functions to some
finite-dimensional subspace
$ V _h ^1 \subset \mathring{ H } ( \operatorname{curl} ; \Omega ) $,
resulting in a finite-dimensional system of ODEs. That is, we seek a
$ C ^1 $ curve $ A _h \colon t \mapsto A _h (t) \in V _h ^1 $ such
that
\begin{equation}
  \label{eqn:galerkin_ampere}
  \int _\Omega \bigl( A _h ^\prime \cdot ( \dot{ {D} } _h + J ) - \operatorname{curl} A _h ^\prime \cdot H _h \bigr)  = 0, \qquad \forall A _h ^\prime \in V _h ^1 ,
\end{equation}
where $ E _h \coloneqq - \dot{ A } _h $,
$ B _h \coloneqq \operatorname{curl} A _h $,
$ {D} _h \coloneqq \epsilon E _h $, and
$ H _h \coloneqq \mu ^{-1} B _h $. The discrete versions of
\eqref{eqn:maxwell_faraday} and \eqref{eqn:maxwell_divB},
\begin{equation*}
  \dot{ B } _h = - \operatorname{curl} E _h , \qquad \operatorname{div} B _h = 0 ,
\end{equation*}
follow immediately. In fact, both hold strongly in $ L ^2 $, by the
same argument as in \autoref{sec:regularity}, since
$ E _h \in V _h ^1 \subset \mathring{ H } ( \operatorname{curl};
\Omega ) $ and
$ B _h \in \operatorname{curl} V _h ^1 \subset \mathring{ H } (
\operatorname{div} ; \Omega ) $. On the other hand, we \emph{cannot}
conclude that $ {D} _h $ is in $ H ( \operatorname{div}; \Omega ) $,
nor that $ H _h $ is in $ H ( \operatorname{curl} ; \Omega ) $, since
\eqref{eqn:galerkin_ampere} only holds for test functions in
$ V _h ^1 $ and not all of
$ \mathring{ H } ( \operatorname{curl} ; \Omega ) $.

Consequently, the charge-conservation constraint
\eqref{eqn:maxwell_divD} is only preserved in the following, much
weaker sense. Let $ V _h ^0 \subset \mathring{ H } ^1 (\Omega) $ be a
finite-dimensional subspace such that
$ \operatorname{grad} V _h ^0 \subset V _h ^1 $. Then, for all
$ \xi _h \in V _h ^0 $, taking
$ A ^\prime _h = \operatorname{grad} \xi _h $ in
\eqref{eqn:galerkin_ampere} and applying
$ \dot{ \rho } + \operatorname{div} J = 0 $ gives
\begin{equation*}
  \int _\Omega ( \operatorname{grad} \xi _h \cdot \dot{ {D} } _h + \xi _h \dot{ \rho } ) = 0 .
\end{equation*} 
Hence, if the initial conditions satisfy
$ \int _\Omega ( \operatorname{grad} \xi _h \cdot {D} _h + \xi _h \rho
) = 0 $, for all $ \xi _h \in V _h ^0 $, then this is preserved by the
flow of \eqref{eqn:galerkin_ampere}.

In particular, suppose now that $ \Omega $ is polyhedral, and that
$ \mathcal{T} _h $ is a triangulation of $ \Omega $ by $3$-simplices
(i.e., tetrahedra) $ K \in \mathcal{T} _h $. We may take $ V _h ^0 $
to be the space of continuous degree-$r$ piecewise polynomials on
$ \mathcal{T} _h $ vanishing on $ \partial \Omega $, corresponding to
standard Lagrange finite elements. For $ V _h ^1 $, we may take either
degree-$r$ \citeauthor{Nedelec1980} edge elements of the first kind
\citep{Nedelec1980} or degree-$ ( r -1 ) $ \citeauthor{Nedelec1986}
edge elements of the second kind \citep{Nedelec1986} with vanishing
degrees of freedom on $ \partial \Omega $. These are spaces of
piecewise-polynomial vector fields in $ \mathbb{R}^3 $ with tangential
(but not necessarily normal) continuity between neighboring
simplices. These choices ensure that
$ \operatorname{grad} V _h ^0 \subset V _h ^1 $, so the weak
charge-conservation argument above holds.

Note, however, that
$ \int _\Omega ( \operatorname{grad} \xi _h \cdot {D} _h + \xi _h \rho
) = 0 $ only says that $ \operatorname{div} {D} _h = \rho $ holds in
an ``averaged'' sense, since (unlike in the infinite-dimensional case)
nonzero $ \xi _h \in V _h ^0 $ cannot be taken to have arbitrarily
small support. We cannot even conclude that the constraint holds in
the sense that
$ \int _{ \partial K } {D} _h \cdot \mathbf{n} = \int _K \rho $, since
the indicator function $ \mathbf{1} _K $ is discontinuous and
therefore not an admissible test function. (\citet{ChWi2006} give a
compactness argument for why this weak form of the constraint ``might
be just as good'' as the strong form, in the limit as
$ h \rightarrow 0 $; see also \citet{Christiansen2005}.) This
motivates our proposed hybrid approach, based on domain decomposition,
for which piecewise-constants \emph{are} admissible test functions.

\begin{remark}
  The method above describes the evolution of $ A _h \in V _h ^1
  $. Equivalently, one may evolve $ E _h \in V _h ^1 $ and
  $ B _h \in \operatorname{curl} V _h ^1 \subset V _h ^2 \subset
  \mathring{ H } ( \operatorname{div}; \Omega ) $ by augmenting
  \eqref{eqn:galerkin_ampere} with
  $ \dot{ B } _h = - \operatorname{curl} E _h $. This is the original
  approach described by \citet{Nedelec1980}, where $ V _h ^2 $ is
  given by face elements on $ \mathcal{T} _h $.
\end{remark}

\section{Domain decomposition}
\label{sec:dd}

In this section, we introduce an alternative variational formulation
for Maxwell's equations, based on domain decomposition. Specifically,
we decompose the problem on $ \Omega $ into a collection of problems
on $ K \in \mathcal{T} _h $, weakly enforcing internal continuity and
external boundary conditions using Lagrange multipliers. This is
similar in spirit to the standard approach to domain decomposition for
Poisson's equation, cf.~\citet{BrFo1991}. We show that the Lagrange
multipliers enforcing these conditions on $A$ and $\varphi$ correspond
to the traces of $H$ and $D$, respectively, and we show that the
latter satisfies an appropriate version of the charge-conservation
constraint.

\subsection{Function spaces}

We begin by introducing the following discontinuous function spaces,
which are larger than the spaces used in the previous variational
formulation:
\begin{align*}
  H ^1 ( \mathcal{T} _h ) &\coloneqq \bigl\{ u \in L ^2 (\Omega) : u \rvert _K \in H ^1 (K), \text{ for all } K \in \mathcal{T} _h \bigr\} ,\\
  H ( \operatorname{curl}; \mathcal{T} _h ) &\coloneqq \bigl\{ u \in L ^2 (\Omega, \mathbb{R}^3 ) : u \rvert _K \in H (\operatorname{curl}; K), \text{ for all } K \in \mathcal{T} _h \bigr\} ,\\
  H ( \operatorname{div} ; \mathcal{T} _h ) &\coloneqq \bigl\{ u \in L ^2 (\Omega, \mathbb{R}^3 ) : u \rvert _K \in H (\operatorname{div}; K), \text{ for all } K \in \mathcal{T} _h \bigr\} .
\end{align*}
\citet[Proposition III.1.1]{BrFo1991} show that
\begin{equation*}
  \mathring{ H } ^1 (\Omega) = \bigl\{ u \in H ^1 ( \mathcal{T} _h ) : \textstyle\sum _{ K \in \mathcal{T} _h } \int _{\partial K} u \lambda \cdot \mathbf{n} = 0 , \text{ for all } \lambda \in H ( \operatorname{div}; \Omega ) \bigr\} .
\end{equation*} 
That is, $ \mathring{ H } ^1 ( \Omega ) $ is the subspace of
$ H ^1 ( \mathcal{T} _h ) $ where internal continuity and external
boundary conditions are enforced by Lagrange multipliers
$ \lambda \in H ( \operatorname{div} ; \Omega ) $. Likewise, \citep[Proposition III.1.2]{BrFo1991} shows that
\begin{equation*}
  H ( \operatorname{div} ; \Omega ) = \bigl\{ u \in H ( \operatorname{div}; \mathcal{T} _h ) : \textstyle\sum _{ K \in \mathcal{T} _h } \int _{\partial K} u \lambda \cdot \mathbf{n} = 0 , \text{ for all } \lambda \in \mathring{ H } ^1  ( \Omega ) \bigr\} .
\end{equation*} 
Using a similar argument, we now prove the corresponding result for
the $ H ( \operatorname{curl} ) $ spaces.

\begin{proposition}
  \label{prop:hcurl}
  $ \mathring{ H } ( \operatorname{curl} ; \Omega ) = \bigl\{ u \in H
  ( \operatorname{curl}; \mathcal{T} _h ) : \sum _{ K \in \mathcal{T}
    _h } \int _{ \partial K } (u \times \lambda ) \cdot \mathbf{n} = 0
  , \text{ for all } \lambda \in H ( \operatorname{curl}; \Omega )
  \bigr\} $.
\end{proposition}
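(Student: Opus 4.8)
The plan is to prove the two set inclusions separately, mirroring the arguments that Brezzi and Fortin use for the $ H ^1 $ and $ H ( \operatorname{div} ) $ cases quoted just above. The engine for both directions is the elementwise integration-by-parts identity coming from the vector calculus relation $ \operatorname{div} ( u \times \lambda ) = \lambda \cdot \operatorname{curl} u - u \cdot \operatorname{curl} \lambda $: for any $ u \in H ( \operatorname{curl} ; \mathcal{T} _h ) $ and $ \lambda \in H ( \operatorname{curl}; \Omega ) $, the divergence theorem on each $ K $ gives
\[
  \int _{ \partial K } ( u \times \lambda ) \cdot \mathbf{n} = \int _K \bigl( \lambda \cdot \operatorname{curl} u - u \cdot \operatorname{curl} \lambda \bigr) ,
\]
where $ \operatorname{curl} u $ denotes the elementwise curl. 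Writing $ w \in L ^2 ( \Omega , \mathbb{R}^3 ) $ for the piecewise curl, $ w \rvert _K \coloneqq \operatorname{curl} ( u \rvert _K ) $, and summing over $ K \in \mathcal{T} _h $ converts the Lagrange-multiplier condition into a single relation between $ u $, $ w $, and $ \lambda $ on all of $ \Omega $.

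For the inclusion $ \subseteq $, I would take $ u \in \mathring{ H } ( \operatorname{curl} ; \Omega ) $, so that its elementwise curl agrees with its global curl. Summing the identity then collapses the right-hand side to $ \int _\Omega ( \lambda \cdot \operatorname{curl} u - u \cdot \operatorname{curl} \lambda ) $, which by the \emph{global} integration-by-parts formula on $ \Omega $ equals the boundary pairing $ \int _{ \partial \Omega } ( u \times \lambda ) \cdot \mathbf{n} = \int _{ \partial \Omega } ( \mathbf{n} \times u ) \cdot \lambda $. This vanishes because $ u \times \mathbf{n} = 0 $ on $ \partial \Omega $, so $ u $ belongs to the right-hand set.

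For the reverse inclusion, suppose $ u \in H ( \operatorname{curl} ; \mathcal{T} _h ) $ satisfies the stated condition for every $ \lambda \in H ( \operatorname{curl}; \Omega ) $. Summing the identity and invoking the hypothesis yields $ \int _\Omega ( \lambda \cdot w - u \cdot \operatorname{curl} \lambda ) = 0 $ for all such $ \lambda $. Restricting first to $ \lambda \in C ^\infty _c ( \Omega , \mathbb{R}^3 ) $, this is exactly the statement that the distributional curl of $ u $ equals $ w $; since $ w \in L ^2 $, we obtain $ u \in H ( \operatorname{curl}; \Omega ) $ with $ \operatorname{curl} u = w $ globally. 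With this in hand, the global integration-by-parts formula now applies to \emph{arbitrary} $ \lambda \in H ( \operatorname{curl}; \Omega ) $, and the condition forces the tangential-trace pairing $ \langle u \times \mathbf{n} , \lambda \rangle _{ \partial \Omega } $ to vanish for every such $ \lambda $, from which I want to conclude $ u \times \mathbf{n} = 0 $, i.e. $ u \in \mathring{ H } ( \operatorname{curl} ; \Omega ) $. This final step is where I expect the real difficulty: on a Lipschitz domain the tangential trace $ u \times \mathbf{n} $ lives only in a negative-order Sobolev space on $ \partial \Omega $, so deducing its vanishing from a vanishing pairing requires knowing that the traces of $ \lambda \in H ( \operatorname{curl}; \Omega ) $ exhaust the dual trace space and thereby separate points. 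I would handle it by appealing to the surjectivity of the tangential trace map out of $ H ( \operatorname{curl}; \Omega ) $ and the self-duality of the associated boundary pairing, citing the standard trace theory for $ H ( \operatorname{curl} ) $ on Lipschitz domains; the remaining bookkeeping, that interior-face contributions cancel because $ \lambda $ has single-valued tangential trace while adjacent outward normals are opposite, is routine.
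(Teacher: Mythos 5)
Your proposal is correct and follows essentially the same route as the paper: both directions rest on the elementwise integration-by-parts identity, the forward inclusion collapses the sum to the global boundary pairing, and the reverse inclusion first tests against $\lambda \in C^\infty_c(\Omega,\mathbb{R}^3)$ to show $u \in H(\operatorname{curl};\Omega)$ (the paper does this via an $L^2$ boundedness estimate rather than by directly identifying the distributional curl with the piecewise curl, but these are the same computation) and then uses general $\lambda$ to kill the tangential trace. The one point you flag as delicate---that vanishing of the pairing $\langle u \times \mathbf{n}, \lambda\rangle_{\partial\Omega}$ against all $\lambda \in H(\operatorname{curl};\Omega)$ forces $u \times \mathbf{n}|_{\partial\Omega} = 0$---is simply asserted ``in the trace sense'' in the paper, so your explicit appeal to the $H(\operatorname{curl})$ trace theory on Lipschitz domains is, if anything, more careful than the original.
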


\begin{proof}
  If
  $ u \in \mathring{ H } ( \operatorname{curl} ; \Omega ) \subset H (
  \operatorname{curl}; \mathcal{T} _h ) $, then for any
  $ \lambda \in H ( \operatorname{curl} ; \Omega ) $, we have
  \begin{align*}
    \sum _{ K \in \mathcal{T} _h } \int _{ \partial K } ( u \times \lambda ) \cdot \mathbf{n}
    &= \sum _{ K \in \mathcal{T} _h } \int _K ( \operatorname{curl} u \cdot \lambda - u \cdot \operatorname{curl} \lambda ) \\
    &= \int _\Omega ( \operatorname{curl} u \cdot \lambda - u \cdot \operatorname{curl} \lambda ) \\
    &= \int _{ \partial \Omega } ( u \times \lambda ) \cdot \mathbf{n}\\
    &= 0,
  \end{align*}
  so the forward inclusion $ ( \subset ) $ holds. To get the reverse
  inclusion $ ( \supset ) $, suppose that
  $ u \in H ( \operatorname{curl} ; \mathcal{T} _h ) $ satisfies the
  condition above, and let
  $ \lambda \in C ^\infty _c ( \Omega , \mathbb{R}^3 ) $. Then,
  integrating by parts, we have
  \begin{align*}
    \biggl\lvert \int _\Omega u \cdot \operatorname{curl} \lambda \biggr\rvert
    &= \Biggl\lvert \sum _{ K \in \mathcal{T} _h } \int _K \operatorname{curl} u \cdot \lambda - \sum _{ K \in \mathcal{T} _h } \int _{ \partial K } (u \times \lambda ) \cdot \mathbf{n} \Biggr\rvert \\
    &= \Biggl\lvert \sum _{ K \in \mathcal{T} _h } \int _K \operatorname{curl} u \cdot \lambda \Biggr\rvert\\
    &\leq \Biggl( \sum _{ K \in \mathcal{T} _h } \lVert  \operatorname{curl} u \rVert ^2 _{ L ^2 ( K , \mathbb{R}^3  ) } \Biggr) ^{ 1/2 } \lVert \lambda \rVert _{ L ^2 ( \Omega , \mathbb{R}^3  ) },
  \end{align*}
  where the last line uses the triangle and Cauchy--Schwarz
  inequalities. It follows that
  $ \operatorname{curl} u \in L ^2 ( \Omega , \mathbb{R}^3 ) $, so
  $ u \in H ( \operatorname{curl} ; \Omega ) $. This implies that
  $ \int _{\partial \Omega} ( u \times \lambda ) \cdot \mathbf{n} =
  \sum _{ K \in \mathcal{T} _h } \int _{ \partial K } (u \times
  \lambda ) \cdot \mathbf{n} = 0 $ for all
  $ \lambda \in H ( \operatorname{curl} ; \Omega ) $. Hence,
  $ u \times \mathbf{n} \rvert _{ \partial \Omega } = 0 $ in the trace
  sense, which completes the proof.
\end{proof}

\begin{remark}
  A variant of this result is stated in \citet[Proposition
  2.1.3]{BoBrFo2013}, where $ \lambda $ is taken to be in
  $ H ^1 ( \Omega , \mathbb{R}^3 ) $ rather than
  $ H ( \operatorname{curl}; \Omega ) $.  However, the version given
  here is more natural for the purposes of the hybrid methods
  discussed in \autoref{sec:hybrid}.
\end{remark}

\subsection{Domain decomposition of the Lagrangian variational principle}

We now introduce a new Lagrangian for Maxwell's equations, which
allows the potentials to live in the discontinuous function spaces
defined in the previous section, enforcing continuity and boundary
conditions using Lagrange multipliers.

Let $ A (t) \in H ( \operatorname{curl} ; \mathcal{T} _h ) $ and
$ \varphi (t) \in H ^1 ( \mathcal{T} _h ) $, and introduce the
Lagrange multipliers
$ \widehat{ {H} } (t) \in H ( \operatorname{curl} ; \Omega ) $ and
$ \widehat{ {D} } (t) \in H ( \operatorname{div}; \Omega ) $. We adopt
the notation, often seen in the literature on discontinuous Galerkin
and hybrid methods, of placing hats over variables that act like weak
traces/fluxes. As before, suppose that $t \mapsto A (t) $ is $ C ^1 $
and that $ t \mapsto \varphi (t) $ is $ C ^0 $, such that
$ t \mapsto {D} (t) \in L ^2 ( \Omega , \mathbb{R}^3 ) $ is $ C ^1
$. Furthermore, suppose that $ t \mapsto \widehat{ H } (t) $ and
$ t \mapsto \widehat{ {D} } (t) $ are both $ C ^0 $.  Define the
Lagrangian
\begin{multline*}
  L ( A, \varphi , \widehat{ H }, \widehat{ {D} } , \dot{ A } , \dot{ \varphi },  \dot{ \widehat{ H } }, \dot{ \widehat{ {D} } }  ) = \sum _{ K \in \mathcal{T} _h } \Biggl[ \int _K \biggl( \frac{1}{2} E \cdot {D} - \frac{1}{2} B \cdot H + A \cdot J - \varphi \rho \biggr) \\
  + \int _{ \partial K } ( A \times \widehat{ H } + \varphi \widehat{
    {D} } ) \cdot \mathbf{n} \Biggr].
\end{multline*}
The Euler--Lagrange equations are then
\begin{subequations}
  \label{eqn:dd_maxwell}
  \begin{alignat}{2}
    \int _K \bigl( A ^\prime \cdot ( \dot{ {D} } + J ) - \operatorname{curl} A ^\prime \cdot H \bigr) + \int _{ \partial K } ( A ^\prime \times \widehat{ H } ) \cdot \mathbf{n} &= 0, \qquad & \forall A ^\prime &\in H ( \operatorname{curl} ; K ) ,\label{eqn:dd_ampere}\\
    \int _K ( \operatorname{grad} \varphi ^\prime \cdot {D} + \varphi ^\prime \rho ) - \int _{ \partial K } \varphi ^\prime \widehat{ {D} } \cdot \mathbf{n} &= 0, &\forall \varphi ^\prime &\in H ^1 (K) , \label{eqn:dd_divD}\\
    \sum _{ K \in \mathcal{T} _h } \int _{ \partial K } ( A \times \widehat{ H } ^\prime ) \cdot \mathbf{n} &= 0 , &\forall \widehat{ H } ^\prime &\in H ( \operatorname{curl}; \Omega ) ,\label{eqn:dd_continuity_A}\\
    \sum _{ K \in \mathcal{T} _h } \int _{ \partial K } \varphi
    \widehat{ {D} } ^\prime \cdot \mathbf{n} &= 0, & \forall \widehat{ {D} } ^\prime &\in H ( \operatorname{div}; \Omega ) ,\label{eqn:dd_continuity_phi}
  \end{alignat}
\end{subequations}
where \eqref{eqn:dd_ampere} and \eqref{eqn:dd_divD} hold for all
$K \in \mathcal{T} _h $. We now relate this to the classical
variational form of Maxwell's equations, stated in
\eqref{eqn:weak_maxwell}.

\begin{proposition}
  $ ( A, \varphi , \widehat{ {H} } , \widehat{ {D} } ) $ is a solution
  to \eqref{eqn:dd_maxwell} if and only if $ ( A, \varphi ) $ is a
  solution to \eqref{eqn:weak_maxwell} with
  $ \widehat{ H } \times \mathbf{n} \rvert _{ \partial K } = H \times
  \mathbf{n} \rvert _{ \partial K } $ and
  $ \widehat{ {D} } \cdot \mathbf{n} \rvert _{ \partial K } = {D}
  \cdot \mathbf{n} \rvert _{ \partial K } $. In particular, if
  $ ( A, \varphi ) $ is a solution to \eqref{eqn:weak_maxwell}, then
  $ ( A, \varphi , H , {D} ) $ is a solution to
  \eqref{eqn:dd_maxwell}.
\end{proposition}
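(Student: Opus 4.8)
The plan is to dispatch the two continuity equations \eqref{eqn:dd_continuity_A} and \eqref{eqn:dd_continuity_phi} first, since these are independent of the evolution and decouple cleanly. Equation \eqref{eqn:dd_continuity_A}, ranging over all $\widehat{H}' \in H(\operatorname{curl};\Omega)$, is precisely the condition characterizing $\mathring{H}(\operatorname{curl};\Omega)$ in \autoref{prop:hcurl}; hence it is equivalent to $A \in \mathring{H}(\operatorname{curl};\Omega)$. Likewise, \eqref{eqn:dd_continuity_phi} is exactly the Brezzi--Fortin characterization quoted above, so it is equivalent to $\varphi \in \mathring{H}^1(\Omega)$. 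Thus \eqref{eqn:dd_continuity_A}--\eqref{eqn:dd_continuity_phi} hold if and only if $A$ and $\varphi$ lie in the correct (continuous, boundary-condition-respecting) spaces, which is part of what it means for $(A,\varphi)$ to solve \eqref{eqn:weak_maxwell}.

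For the forward implication I would extract the interior equations and the trace identities from \eqref{eqn:dd_ampere} and \eqref{eqn:dd_divD} by a two-step testing argument on each $K$. First, restricting \eqref{eqn:dd_ampere} to test functions $A'$ in $H(\operatorname{curl};K)$ with vanishing tangential trace kills the boundary term, leaving $\int_K (A' \cdot (\dot{D}+J) - \operatorname{curl} A' \cdot H) = 0$; testing further against $A' \in C_c^\infty(K,\mathbb{R}^3)$ shows $\operatorname{curl} H = \dot{D}+J$ distributionally, and since $\dot{D}+J \in L^2(K)$ this upgrades $H$ to $H(\operatorname{curl};K)$ on each element. With this regularity in hand, I integrate by parts in the full equation \eqref{eqn:dd_ampere}, so that the interior terms cancel and there remains $\int_{\partial K}(A' \times (\widehat{H}-H)) \cdot \mathbf{n} = 0$ for all $A' \in H(\operatorname{curl};K)$; because the tangential trace map is surjective onto the relevant trace space, this forces $\widehat{H}\times\mathbf{n}\rvert_{\partial K} = H\times\mathbf{n}\rvert_{\partial K}$. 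The identical argument applied to \eqref{eqn:dd_divD}, using $\operatorname{grad}$--$\operatorname{div}$ integration by parts, yields $D \in H(\operatorname{div};K)$ with $\operatorname{div} D = \rho$ and the normal-trace identity $\widehat{D}\cdot\mathbf{n}\rvert_{\partial K} = D\cdot\mathbf{n}\rvert_{\partial K}$.

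To recover \eqref{eqn:weak_maxwell} itself, I would sum \eqref{eqn:dd_ampere} over $K \in \mathcal{T}_h$ against a single global test function $A' \in \mathring{H}(\operatorname{curl};\Omega)$, whose restriction to each $K$ is an admissible local test function. The interior integrals reassemble into $\int_\Omega(A'\cdot(\dot D + J) - \operatorname{curl} A' \cdot H)$, while the boundary sum $\sum_K \int_{\partial K}(A'\times\widehat H)\cdot\mathbf{n}$ vanishes by \autoref{prop:hcurl} (reading $A'$ as the ``$u$'' in the forward inclusion and $\widehat H \in H(\operatorname{curl};\Omega)$ as the ``$\lambda$''), producing exactly \eqref{eqn:weak_ampere}. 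The analogous summation of \eqref{eqn:dd_divD} against $\varphi' \in \mathring{H}^1(\Omega)$, with the boundary sum cancelled by the Brezzi--Fortin identity, produces \eqref{eqn:weak_divD}. The reverse implication is then the easy direction: given a solution of \eqref{eqn:weak_maxwell}, the regularity discussion following that system already places $H \in H(\operatorname{curl};\Omega)$ and $D \in H(\operatorname{div};\Omega)$ with $\operatorname{curl} H = \dot D + J$ and $\operatorname{div} D = \rho$; for any $\widehat H, \widehat D$ meeting the trace conditions, one integrates by parts on each $K$ to see that \eqref{eqn:dd_ampere}--\eqref{eqn:dd_divD} reduce to vanishing interior residuals plus boundary terms that vanish by the trace matching, while \eqref{eqn:dd_continuity_A}--\eqref{eqn:dd_continuity_phi} hold because $A,\varphi$ already lie in the continuous spaces. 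The ``in particular'' claim is the special case $\widehat H = H$, $\widehat D = D$, for which the trace conditions are automatic.

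I expect the main obstacle to be the functional-analytic justification of the trace-matching step in the forward direction: passing from ``$\int_{\partial K}(A'\times(\widehat H - H))\cdot\mathbf n = 0$ for all admissible $A'$'' to the trace-sense identity $\widehat H \times \mathbf n = H \times \mathbf n$ requires knowing that the tangential traces of $H(\operatorname{curl};K)$ functions exhaust the dual trace space in which $\widehat H \times \mathbf n$ and $H \times \mathbf n$ are compared, and similarly for normal traces in $H(\operatorname{div};K)$. Everything else is bookkeeping with the two integration-by-parts formulas and the three subspace characterizations.
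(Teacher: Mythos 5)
Your proposal is correct and follows essentially the same route as the paper's proof: \autoref{prop:hcurl} and the Brezzi--Fortin characterization identify the continuity equations \eqref{eqn:dd_continuity_A}--\eqref{eqn:dd_continuity_phi} with conformity of $A$ and $\varphi$, the strong equations $\operatorname{curl} H = \dot{D} + J$ and $\operatorname{div} D = \rho$ are recovered, and integration by parts on each $K$ yields the trace identities, with the reverse direction and the ``in particular'' clause handled just as in the paper. The only (immaterial) difference is ordering: you derive the strong equations locally on each $K$ via compactly supported test functions before assembling \eqref{eqn:weak_ampere}, whereas the paper first sums over $K$ to obtain \eqref{eqn:weak_ampere} and then invokes the global regularity discussion of \autoref{sec:weak}.
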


\begin{proof}
  Suppose $ ( A, \varphi , \widehat{ {H} } , \widehat{ {D} } ) $ is a
  solution to \eqref{eqn:dd_maxwell}. By \autoref{prop:hcurl},
  \eqref{eqn:dd_continuity_A} implies
  $ A (t) \in \mathring{ H } ( \operatorname{curl} ; \Omega ) $, so
  taking
  $ A ^\prime \in \mathring{ H } ( \operatorname{curl} ; \Omega ) $
  and summing \eqref{eqn:dd_ampere} over $ K \in \mathcal{T} _h $, the
  integrals over $ \partial K $ cancel, yielding
  \eqref{eqn:weak_ampere}. As previously stated,
  \eqref{eqn:weak_ampere} implies
  $ \operatorname{curl} H = \dot{ {D} } + J $, so substituting this
  into \eqref{eqn:dd_ampere} gives
  \begin{equation*}
    \int _{ \partial K } ( A ^\prime \times \widehat{ H } ) \cdot \mathbf{n} = \int _K ( \operatorname{curl} A ^\prime \cdot H - A ^\prime \cdot \operatorname{curl} H ) = \int _{ \partial K } ( A ^\prime \times  H ) \cdot \mathbf{n} , \qquad \forall A ^\prime \in H ( \operatorname{curl} ; K ) ,
  \end{equation*} 
  so
  $ \widehat{ H } \times \mathbf{n} \rvert _{ \partial K } = H \times
  \mathbf{n} \rvert _{ \partial K } $. Similarly,
  \eqref{eqn:dd_continuity_phi} implies
  $ \varphi (t) \in \mathring{ H } ^1 (\Omega) $, so taking
  $ \varphi ^\prime \in \mathring{ H } ^1 (\Omega) $ and summing
  \eqref{eqn:dd_divD} over $ K \in \mathcal{T} _h $ yields
  \eqref{eqn:weak_divD}. This implies
  $ \operatorname{div} {D} = \rho $, and substituting into
  \eqref{eqn:dd_divD} gives
  $ \widehat{ {D} } \cdot \mathbf{n} \rvert _{ \partial K } = {D}
  \cdot \mathbf{n} \rvert _{ \partial K } $.

  Conversely, suppose $ ( A, \varphi ) $ is a solution to
  \eqref{eqn:weak_maxwell}. Since
  $ A (t) \in \mathring{ H } ( \operatorname{curl} ; \Omega ) $ and
  $ \varphi (t) \in \mathring{ H } ^1 (\Omega) $, it follows that
  \eqref{eqn:dd_continuity_A} and \eqref{eqn:dd_continuity_phi}
  hold. Furthermore, \eqref{eqn:weak_maxwell} implies that
  $ \dot{ {D} } + J = \operatorname{curl} H $ and
  $ \operatorname{div} {D} = \rho $, so \eqref{eqn:dd_ampere} and
  \eqref{eqn:dd_divD} hold with
  $ \widehat{ {H} } \times \mathbf{n} \rvert _{ \partial K } = H
  \times \mathbf{n} \rvert _{ \partial K } $ and
  $ \widehat{ {D} } \cdot \mathbf{n} \rvert _{ \partial K } = {D}
  \cdot \mathbf{n} \rvert _{ \partial K } $. In particular, we could
  take $ \widehat{ H } = H $ and $ \widehat{ {D} } = {D} $.
\end{proof}

\begin{remark}
  \label{rmk:divDhat}
  Note that, in addition to \eqref{eqn:dd_divD} implying that
  $ \operatorname{div} {D} = \rho $, we also see by taking
  $ \varphi ^\prime = \mathbf{1} _K $ that $ \widehat{ {D} } $
  satisfies the conservation law
  $ \int _{ \partial K } \widehat{ {D} } \cdot \mathbf{n} = \int _K
  \rho $, for all $ K \in \mathcal{T} _h $.
\end{remark}

\subsection{Temporal gauge fixing and the charge-conservation
  constraint}

As in \autoref{sec:weak}, if
$ ( A, \varphi , \widehat{ H } , \widehat{ {D} } ) $ is a solution to
\eqref{eqn:dd_maxwell}, then so is
$ ( A + \operatorname{grad} \xi, \varphi - \dot{ \xi } , \widehat{ H }
, \widehat{ {D} } ) $ for any $ C ^1 $ curve
$ t \mapsto \xi (t) \in \mathring{ H } ^1 (\Omega) $. Therefore, we
perform temporal gauge fixing by taking $ \varphi = 0 $. This yields
the gauge-fixed Lagrangian
\begin{equation*}
  L ( A, \widehat{ H }, \dot{ A } , \dot{ \widehat{ H } } ) = \sum _{ K \in \mathcal{T} _h } \Biggl[ \int _K \biggl( \frac{1}{2} E \cdot {D} - \frac{1}{2} B \cdot H + A \cdot J  \biggr) 
  + \int _{ \partial K }  (A \times \widehat{ H } ) \cdot \mathbf{n} \Biggr],
\end{equation*} 
whose Euler--Lagrange equations are simply \eqref{eqn:dd_ampere} and
\eqref{eqn:dd_continuity_A}. Of course, \eqref{eqn:dd_continuity_phi}
is satisfied trivially, since $ \varphi = 0 $. The next result shows
that the charge-conservation constraint \eqref{eqn:dd_divD} is
automatically preserved, for an appropriately-defined
$ \widehat{ {D} } $.

\begin{proposition}
  \label{prop:dd_maxwell_conservation}
  Let $ ( A, \widehat{ H } ) $ be a solution to \eqref{eqn:dd_ampere}
  and \eqref{eqn:dd_continuity_A}. Suppose initial values for
  $ {D}, \widehat{ {D} } $ satisfy \eqref{eqn:dd_divD}, and let
  $ \widehat{ {D} } $ be the solution to
  $ \dot{ \widehat{ {D} } } + J = \operatorname{curl} \widehat{ H }
  $. Then $ ( A, 0, \widehat{ H } , \widehat{ {D} } ) $ is a solution
  to \eqref{eqn:dd_maxwell}.
\end{proposition}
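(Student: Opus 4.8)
The plan is to observe that three of the four equations in \eqref{eqn:dd_maxwell} hold essentially for free, so that the entire content of the proposition is the \emph{time-preservation} of the remaining equation \eqref{eqn:dd_divD}. Indeed, setting $ \varphi = 0 $ makes \eqref{eqn:dd_continuity_phi} vacuous, while the hypothesis that $ ( A, \widehat{ H } ) $ solves \eqref{eqn:dd_ampere} and \eqref{eqn:dd_continuity_A} disposes of those two equations directly. So I would fix $ K \in \mathcal{T} _h $ and a time-independent test function $ \varphi ^\prime \in H ^1 (K) $, define $ F _K ( \varphi ^\prime , t ) \coloneqq \int _K ( \operatorname{grad} \varphi ^\prime \cdot {D} + \varphi ^\prime \rho ) - \int _{ \partial K } \varphi ^\prime \widehat{ {D} } \cdot \mathbf{n} $, and aim to show it vanishes for all $t$. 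By the hypothesis on the initial values it vanishes at $ t = 0 $, so it suffices to prove $ \dot{ F } _K ( \varphi ^\prime , t ) = 0 $; since $ {D} $, $\rho$, and $ \widehat{ {D} } $ are all $ C ^1 $ in $t$ (note $ \widehat{ {D} } $ is $ C ^1 $ because $ \dot{ \widehat{ {D} } } = \operatorname{curl} \widehat{ H } - J $ is $ C ^0 $), this reduces to a purely algebraic identity at each fixed time.

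The crux of the argument is to feed the gradient test function $ A ^\prime = \operatorname{grad} \varphi ^\prime $ into the Amp\`ere equation \eqref{eqn:dd_ampere}. This is admissible since $ \operatorname{grad} \varphi ^\prime \in H ( \operatorname{curl} ; K ) $, and its curl vanishes, so the volume term $ \int _K \operatorname{curl} A ^\prime \cdot H $ drops out, leaving $ \int _K \operatorname{grad} \varphi ^\prime \cdot ( \dot{ {D} } + J ) + \int _{ \partial K } ( \operatorname{grad} \varphi ^\prime \times \widehat{ H } ) \cdot \mathbf{n} = 0 $. I would then rewrite the boundary term using the $ \operatorname{curl} $ integration-by-parts identity $ \int _K ( \operatorname{curl} u \cdot v - u \cdot \operatorname{curl} v ) = \int _{ \partial K } ( u \times v ) \cdot \mathbf{n} $ with $ u = \operatorname{grad} \varphi ^\prime $ and $ v = \widehat{ H } $ (both in $ H ( \operatorname{curl} ; K ) $); since $ \operatorname{curl} \operatorname{grad} \varphi ^\prime = 0 $ this gives $ \int _{ \partial K } ( \operatorname{grad} \varphi ^\prime \times \widehat{ H } ) \cdot \mathbf{n} = - \int _K \operatorname{grad} \varphi ^\prime \cdot \operatorname{curl} \widehat{ H } $. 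Substituting the defining relation $ \operatorname{curl} \widehat{ H } = \dot{ \widehat{ {D} } } + J $ then collapses the Amp\`ere equation to the clean identity $ \int _K \operatorname{grad} \varphi ^\prime \cdot \dot{ {D} } = \int _K \operatorname{grad} \varphi ^\prime \cdot \dot{ \widehat{ {D} } } $.

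To finish, I would convert the right-hand side back to a boundary term using the divergence integration-by-parts formula $ \int _K ( \operatorname{grad} \varphi ^\prime \cdot \dot{ \widehat{ {D} } } + \varphi ^\prime \operatorname{div} \dot{ \widehat{ {D} } } ) = \int _{ \partial K } \varphi ^\prime \dot{ \widehat{ {D} } } \cdot \mathbf{n} $, valid because $ \dot{ \widehat{ {D} } } = \operatorname{curl} \widehat{ H } - J \in H ( \operatorname{div} ; \Omega ) $. Here charge conservation enters: $ \operatorname{div} \dot{ \widehat{ {D} } } = \operatorname{div} \operatorname{curl} \widehat{ H } - \operatorname{div} J = - \operatorname{div} J = \dot{ \rho } $. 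Combining the last two identities yields $ \int _K \operatorname{grad} \varphi ^\prime \cdot \dot{ {D} } = \int _{ \partial K } \varphi ^\prime \dot{ \widehat{ {D} } } \cdot \mathbf{n} - \int _K \varphi ^\prime \dot{ \rho } $, which rearranges to exactly $ \dot{ F } _K ( \varphi ^\prime , t ) = 0 $. Hence $ F _K ( \varphi ^\prime , \cdot ) $ is constant in time and equals its vanishing initial value, establishing \eqref{eqn:dd_divD} for all $t$ and completing the verification of \eqref{eqn:dd_maxwell}.

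I expect the main obstacle to be bookkeeping of the trace pairings rather than any conceptual difficulty: one must check that each integration by parts is legitimate in the relevant spaces---the $ \operatorname{curl} $ identity pairs tangential traces, while the $ \operatorname{div} $ identity pairs the normal trace $ \dot{ \widehat{ {D} } } \cdot \mathbf{n} \in H ^{ -1/2 } ( \partial K ) $ against $ \varphi ^\prime \rvert _{ \partial K } \in H ^{ 1/2 } ( \partial K ) $---and that the boundary term $ \int _{ \partial K } \varphi ^\prime \dot{ \widehat{ {D} } } \cdot \mathbf{n} $ is interpreted consistently across the two computations. The single genuinely clever move is the substitution $ A ^\prime = \operatorname{grad} \varphi ^\prime $, which exploits that the space of curl-conforming test functions on $K$ contains all gradients; everything else is integration by parts together with the charge-conservation law.
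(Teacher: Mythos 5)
Your proof is correct and follows essentially the same route as the paper's: the key step in both is substituting $ A ^\prime = \operatorname{grad} \varphi ^\prime $ into \eqref{eqn:dd_ampere}, invoking $ \dot{ \widehat{ D } } + J = \operatorname{curl} \widehat{ H } $ and charge conservation $ \dot{ \rho } + \operatorname{div} J = 0 $, and concluding that the left-hand side of \eqref{eqn:dd_divD} has vanishing time derivative. The only difference is cosmetic bookkeeping---you move the boundary pairing $ \int _{ \partial K } ( \operatorname{grad} \varphi ^\prime \times \widehat{ H } ) \cdot \mathbf{n} $ into the volume and back out via the divergence identity, whereas the paper converts it directly to $ - \int _{ \partial K } \varphi ^\prime \operatorname{curl} \widehat{ H } \cdot \mathbf{n} $---and both arrive at the identical final identity.
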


\begin{proof}
  As we have already mentioned, $ \varphi = 0 $ trivially satisfies
  \eqref{eqn:dd_continuity_phi}, so it suffices to show that
  \eqref{eqn:dd_divD} holds.  Let $ \varphi ^\prime \in H ^1 (K) $ be
  arbitrary. Taking
  $ A ^\prime = \operatorname{grad} \varphi ^\prime $ in
  \eqref{eqn:dd_ampere} and integrating by parts gives
  \begin{align*}
    0 &= \int _K \operatorname{grad}  \varphi  ^\prime \cdot ( \dot{ {D} } + J ) + \int _{ \partial K } (\operatorname{grad} \varphi ^\prime \times \widehat{ H } ) \cdot \mathbf{n} \\
      &= \int _K ( \operatorname{grad} \varphi ^\prime \cdot \dot{ {D} } - \varphi ^\prime \operatorname{div} J  ) + \int _{ \partial K } \varphi ^\prime ( J - \operatorname{curl} \widehat{ H }  ) \cdot \mathbf{n} \\
      &= \int _K ( \operatorname{grad} \varphi ^\prime \cdot \dot{ {D} } + \varphi ^\prime \dot{ \rho }  ) - \int _{ \partial K } \varphi ^\prime \dot{ \widehat{ {D} } }  \cdot \mathbf{n} ,
  \end{align*}
  so if \eqref{eqn:dd_divD} holds at the initial time, then it holds
  for all time.
\end{proof}

\begin{remark}
  As in \autoref{rmk:divDhat}, taking
  $ \varphi ^\prime = \mathbf{1} _K $ implies
  $ \int _{ \partial K } \widehat{ {D} } \cdot \mathbf{n} = \int _K
  \rho $. Furthermore, if the initial conditions also satisfy
  $ \operatorname{div} \widehat{ {D} } = \rho $, then we have
  $ \operatorname{div} \widehat{ {D} } = \rho $ for all time, since
  $ \operatorname{div} \dot{ \widehat{ {D} } } = \operatorname{div}
  \operatorname{curl} \widehat{ H } - \operatorname{div} J = 0 + \dot{
    \rho } $.  Finally, if $ \widehat{ H } = H $, and if the initial
  conditions for $ \widehat{ {D} } $ equal those for $ {D} $, then we
  recover $ \widehat{ {D} } = {D} $.
\end{remark}

Finally, we express this variational problem in the standard notation
used for mixed and hybrid finite element methods, in terms of a pair
of bilinear forms \citep[Chapter II]{BrFo1991}. We will make use of
this notation throughout the subsequent sections. Defining
\begin{align*}
  a &\colon H ( \operatorname{curl} ; \mathcal{T} _h ) \times H ( \operatorname{curl} ; \mathcal{T} _h ) \rightarrow \mathbb{R}  , & a ( A , A ^\prime  ) &\coloneqq \sum _{ K \in \mathcal{T} _h } \int _K \operatorname{curl} A ^\prime \cdot \mu ^{-1} \operatorname{curl} A  ,\\
  b &\colon H ( \operatorname{curl} ; \mathcal{T} _h ) \times H ( \operatorname{curl} ; \Omega ) \rightarrow \mathbb{R}  , & b ( A ^\prime , \widehat{ H } ) &\coloneqq - \sum _{ K \in \mathcal{T} _h } \int _{ \partial K } ( A ^\prime \times \widehat{ H } ) \cdot \mathbf{n} ,
\end{align*}
we seek
$ t \mapsto A (t) \in H ( \operatorname{curl} ; \mathcal{T} _h ) $ and
$ t \mapsto \widehat{ H } (t) \in H ( \operatorname{curl} ; \Omega ) $
such that
\begin{subequations}
  \label{eqn:dd_abstract}
\begin{alignat}{2}
  \langle \dot{ {D} } + J , A ^\prime \rangle &= a ( A , A ^\prime ) + b ( A ^\prime, \widehat{ H } ) , &\qquad \forall A ^\prime &\in H ( \operatorname{curl} ; \mathcal{T} _h ) ,\label{eqn:dd_abstract_a}\\
  0 &= b ( A, \widehat{ H } ^\prime ) , &\forall \widehat{ H } ^\prime &\in H ( \operatorname{curl} ; \Omega ) \label{eqn:dd_abstract_b},
\end{alignat}
\end{subequations}
where $ \langle \cdot , \cdot \rangle $ is the
$ L ^2 ( \Omega , \mathbb{R}^3 ) $ inner product. Defining the map
$ \mathcal{B} \colon H ( \operatorname{curl} ; \mathcal{T} _h )
\rightarrow H ( \operatorname{curl} ; \Omega ) ^\ast $,
$ A \mapsto b ( A, \cdot ) $, we see that \eqref{eqn:dd_abstract} is
equivalent to evolving $ A (t) \in \ker \mathcal{B} $ by
\begin{equation*}
  \langle \dot{ {D} } + J , A ^\prime \rangle = a ( A , A ^\prime )  , \qquad \forall A ^\prime \in \ker \mathcal{B} ,
\end{equation*} 
and subsequently solving for $ \widehat{ H } $ satisfying
\eqref{eqn:dd_abstract_a}. Since
$ \ker \mathcal{B} = \mathring{ H } ( \operatorname{curl} ; \Omega ) $
by \autoref{prop:hcurl}, it follows that $A$ solves the
non-domain-decomposed problem \eqref{eqn:weak_ampere}.

\section{Hybrid semidiscretization}
\label{sec:hybrid}

We now perform Galerkin semidiscretization of the domain-decomposed
variational problem with temporal gauge fixing, as introduced in the
previous section. This results in a hybrid method for Maxwell's
equations, where ``hybrid'' means that the Lagrange multipliers
$ \widehat{ H } _h $ and their test functions
$ \widehat{ H } _h ^\prime $ are both restricted to a subspace of
$ H ( \operatorname{curl}; \Omega ) $. We then show that a
suitably-defined $ \widehat{ {D} } _h $ satisfies the
charge-conservation constraint in a strong sense, as opposed to the
much weaker sense in which $ {D} _h $ was seen to satisfy this
constraint in \autoref{sec:nedelec}. Finally, we discuss how certain
choices of elements yield a hybridized version of N\'ed\'elec's
method, while others give nonconforming methods, and we remark on how
this framework also applies to the 2-{D} Maxwell equations.

\subsection{Semidiscretization of the variational problem}
\label{sec:hybrid_variational}
For each $ K \in \mathcal{T} _h $, let
$ V _h ^1 (K) \subset H ( \operatorname{curl}; K ) $ be a
finite-dimensional subspace, so 
$ V _h ^1 \coloneqq \prod _{ K \in \mathcal{T} _h } V _h ^1 (K)
\subset H ( \operatorname{curl} ; \mathcal{T} _h ) $, and let
$ \widehat{ V } _h ^1 \subset H ( \operatorname{curl} ; \Omega ) $. We
seek $ A _h \colon t \mapsto A _h (t) \in V _h ^1 $ and
$ \widehat{ H } _h \colon t \mapsto \widehat{ H } _h (t) \in \widehat{ V }
_h ^1 $ such that
\begin{subequations}
  \label{eqn:hybrid_maxwell}
\begin{alignat}{2}
  \int _K \bigl( A _h ^\prime \cdot ( \dot{ {D} } _h + J ) - \operatorname{curl} A _h ^\prime \cdot H _h \bigr) + \int _{ \partial K } ( A _h ^\prime \times \widehat{ H } _h ) \cdot \mathbf{n} &= 0, \qquad & \forall A _h ^\prime &\in V _h ^1 (K)  ,\label{eqn:hybrid_ampere}\\
  \sum _{ K \in \mathcal{T} _h } \int _{ \partial K } ( A _h \times
  \widehat{ H } _h ^\prime ) \cdot \mathbf{n} &= 0 , &\forall \widehat{ H
  } _h ^\prime &\in \widehat{ V } _h ^1 ,\label{eqn:hybrid_continuity_A}
\end{alignat}
\end{subequations}
where \eqref{eqn:hybrid_ampere} holds for all $K \in \mathcal{T} _h
$. These are the semidiscretized versions of \eqref{eqn:dd_ampere} and
\eqref{eqn:dd_continuity_A}.

\begin{remark}
  Since \eqref{eqn:hybrid_continuity_A} only holds for test functions
  in $ \widehat{ V } _h ^1 $, but not necessarily an arbitrary test
  function in $ H ( \operatorname{curl}; \Omega ) $, in general a
  solution will have
  $ A _h (t) \notin \mathring{ H } ( \operatorname{curl} ; \Omega )
  $. Hence, this method is generally not curl-conforming and is
  distinct from the conforming methods discussed in
  \autoref{sec:nedelec}.
\end{remark}

In terms of the bilinear forms $ a ( \cdot , \cdot ) $ and
$ b ( \cdot , \cdot ) $, this method may be written as
\begin{subequations}
  \label{eqn:hybrid_abstract}
\begin{alignat}{2}
  \langle \dot{ {D} } _h + J , A _h ^\prime \rangle &= a ( A _h , A _h  ^\prime ) + b ( A _h ^\prime, \widehat{ H } _h ) , &\qquad \forall A _h  ^\prime &\in V _h ^1  ,\label{eqn:hybrid_abstract_a}\\
  0 &= b ( A _h , \widehat{ H } _h ^\prime ) , &\forall \widehat{ H } _h ^\prime &\in \widehat{ V } _h ^1  \label{eqn:hybrid_abstract_b}.
\end{alignat}
\end{subequations}
Defining the operator
$ \mathcal{B} _h \colon V _h ^1 \rightarrow ( \widehat{ V } _h ^1 )
^\ast $,
$ A _h \mapsto b ( A _h , \cdot ) \rvert _{ \widehat{ V } _h ^1 } $,
we see that \eqref{eqn:hybrid_abstract} is equivalent to evolving
$ A _h (t) \in \ker \mathcal{B} _h $ by
\begin{equation}
  \label{eqn:hybrid_kerBh}
  \langle \dot{ {D} } _h + J , A _h ^\prime \rangle = a ( A _h , A _h  ^\prime ) , \qquad \forall A _h  ^\prime \in \ker \mathcal{B} _h ,
\end{equation}
and subsequently solving for $ \widehat{ H } _h $ satisfying
\eqref{eqn:hybrid_abstract_a}.

Since $ V _h ^1 $ is finite-dimensional, we may apply Banach's closed
range theorem to deduce that
$ \langle \dot{ {D} } _h + J , \cdot \rangle - a ( A _h , \cdot ) \in
(\ker \mathcal{B} _h) ^\perp $ is in the range of
$ \mathcal{B} _h ^\ast $, so a solution $ \widehat{ H } _h $ exists,
although generally not uniquely. A natural choice is to find the
solution $ \widehat{ H } _h $ minimizing
$ \lVert H _h - \widehat{ H } _h \rVert ^2 + \lVert \dot{ {D} } _h + J
- \operatorname{curl} \widehat{ H } _h \rVert ^2 $, which in a weak
sense minimizes the $ H ( \operatorname{curl} ; \Omega ) $ distance
between $ H _h $ and $ \widehat{ H } _h $. This
existence-without-uniqueness is typical of hybrid methods, and one may
formally resolve this by replacing $ \widehat{ V } _h ^1 $ by the
quotient space $ \widehat{ V } _h ^1 / \ker \mathcal{B} _h ^\ast $
(cf.~\citet[IV.1.3]{BrFo1991}). In practice, the evolution on
$ \ker \mathcal{B} _h $ specified by \eqref{eqn:hybrid_kerBh} is the
essence of the method, and solving for $ \widehat{ H } _h $ may be
seen as an optional post-processing step.

\subsection{Preservation of the charge-conservation constraint}
\label{sec:charge_conservation}
In order to discuss the charge-conservation constraint, we first
suppose that $ V _h ^0 (K) \subset H ^1 (K) $ are such that
$ \mathbf{1} _K \in V _h ^0 (K) $ and
$ \operatorname{grad} V _h ^0 (K) \subset V _h ^1 (K) $ for all
$ K \in \mathcal{T} _h $. We consider whether the following
discretization of \eqref{eqn:dd_divD} is preserved,
\begin{equation}
  \label{eqn:hybrid_divD}
  \int _K ( \operatorname{grad} \varphi _h ^\prime \cdot {D} _h + \varphi _h ^\prime \rho ) - \int _{ \partial K } \varphi _h ^\prime \widehat{ {D} } _h \cdot \mathbf{n} = 0, \qquad \forall \varphi ^\prime _h \in V _h ^0  (K) ,
\end{equation}
for
$ \widehat{ {D} } _h \colon t \mapsto \widehat{ {D} } _h (t) \in H (
\operatorname{div} ; \Omega ) $ suitably defined.

\begin{theorem}
  \label{thm:hybrid_maxwell_conservation}
  Let $ ( A _h , \widehat{ H } _h ) $ be a solution to
  \eqref{eqn:hybrid_maxwell}. Suppose initial values for
  $ {D} _h , \widehat{ {D} } _h $ satisfy \eqref{eqn:hybrid_divD}, and
  let $ \widehat{ {D} } _h $ be the solution to
  $ \dot{ \widehat{ {D} } } _h + J = \operatorname{curl} \widehat{ H }
  _h $. Then \eqref{eqn:hybrid_divD} holds for all time. In
  particular,
  $ \int _{ \partial K } \widehat{ {D} } _h \cdot \mathbf{n} = \int _K
  \rho $. Moreover, if
  $ \operatorname{div} \widehat{ {D} } _h = \rho $ holds at the
  initial time, then it holds for all time.
\end{theorem}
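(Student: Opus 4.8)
The plan is to adapt the proof of \autoref{prop:dd_maxwell_conservation} essentially verbatim, replacing each continuous object by its semidiscrete counterpart; the only thing that needs checking along the way is that the necessary test functions remain admissible in the finite-dimensional spaces, which is exactly what the standing hypotheses $ \mathbf{1} _K \in V _h ^0 (K) $ and $ \operatorname{grad} V _h ^0 (K) \subset V _h ^1 (K) $ are designed to guarantee. Since $ \varphi = 0 $ trivially handles the analog of \eqref{eqn:dd_continuity_phi}, it suffices to show that \eqref{eqn:hybrid_divD} is preserved in time.

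First I would fix an arbitrary $ \varphi _h ^\prime \in V _h ^0 (K) $ and take $ A _h ^\prime = \operatorname{grad} \varphi _h ^\prime $ in \eqref{eqn:hybrid_ampere}; this is legitimate precisely because $ \operatorname{grad} V _h ^0 (K) \subset V _h ^1 (K) $. Since $ \operatorname{curl} \operatorname{grad} \varphi _h ^\prime = 0 $, the $ H _h $ term drops out, leaving the volume term $ \int _K \operatorname{grad} \varphi _h ^\prime \cdot ( \dot{ {D} } _h + J ) $ together with the boundary term $ \int _{ \partial K } ( \operatorname{grad} \varphi _h ^\prime \times \widehat{ H } _h ) \cdot \mathbf{n} $. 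Two integration-by-parts identities then transform these: on the volume term, $ \int _K \operatorname{grad} \varphi _h ^\prime \cdot J = \int _{ \partial K } \varphi _h ^\prime J \cdot \mathbf{n} - \int _K \varphi _h ^\prime \operatorname{div} J $, and on the boundary term, the identity $ \operatorname{curl} ( \varphi _h ^\prime \widehat{ H } _h ) = \operatorname{grad} \varphi _h ^\prime \times \widehat{ H } _h + \varphi _h ^\prime \operatorname{curl} \widehat{ H } _h $ combined with $ \int _{ \partial K } \operatorname{curl} ( \,\cdot\, ) \cdot \mathbf{n} = 0 $ yields $ \int _{ \partial K } ( \operatorname{grad} \varphi _h ^\prime \times \widehat{ H } _h ) \cdot \mathbf{n} = - \int _{ \partial K } \varphi _h ^\prime \operatorname{curl} \widehat{ H } _h \cdot \mathbf{n} $.

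Next I would substitute the two hypotheses $ \dot{ \rho } + \operatorname{div} J = 0 $ and $ \dot{ \widehat{ {D} } } _h + J = \operatorname{curl} \widehat{ H } _h $, which turn the combined expression into $ \int _K ( \operatorname{grad} \varphi _h ^\prime \cdot \dot{ {D} } _h + \varphi _h ^\prime \dot{ \rho } ) - \int _{ \partial K } \varphi _h ^\prime \dot{ \widehat{ {D} } } _h \cdot \mathbf{n} = 0 $. This is exactly the time derivative of the left-hand side of \eqref{eqn:hybrid_divD}, so if \eqref{eqn:hybrid_divD} holds initially it holds for all time. For the ``in particular'' claim I would take $ \varphi _h ^\prime = \mathbf{1} _K $, admissible by the hypothesis $ \mathbf{1} _K \in V _h ^0 (K) $; its gradient vanishes, so \eqref{eqn:hybrid_divD} collapses to $ \int _{ \partial K } \widehat{ {D} } _h \cdot \mathbf{n} = \int _K \rho $.

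Finally, for the ``moreover'' claim I would differentiate $ \operatorname{div} \widehat{ {D} } _h $ in time and use $ \operatorname{div} \operatorname{curl} \widehat{ H } _h = 0 $ together with charge conservation: $ \operatorname{div} \dot{ \widehat{ {D} } } _h = \operatorname{div} ( \operatorname{curl} \widehat{ H } _h - J ) = - \operatorname{div} J = \dot{ \rho } $, so $ \operatorname{div} \widehat{ {D} } _h - \rho $ is constant in time and therefore vanishes for all time if it vanishes initially. I expect no genuine obstacle here: the vector-calculus manipulations are pointwise identities insensitive to discreteness, so the sole conceptual point---and the reason this strong form of the constraint is available where N\'ed\'elec's method of \autoref{sec:nedelec} only gives the weak ``averaged'' form---is that the domain-decomposed trial space makes the element indicator $ \mathbf{1} _K $ an admissible test function.
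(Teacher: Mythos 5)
Your proposal is correct and follows essentially the same route as the paper's proof: take $ A _h ^\prime = \operatorname{grad} \varphi _h ^\prime $ in \eqref{eqn:hybrid_ampere}, integrate by parts on the volume and boundary terms, substitute $ \dot{ \rho } + \operatorname{div} J = 0 $ and $ \dot{ \widehat{ {D} } } _h + J = \operatorname{curl} \widehat{ H } _h $ to recognize the time derivative of \eqref{eqn:hybrid_divD}, then use $ \varphi _h ^\prime = \mathbf{1} _K $ and $ \operatorname{div} \operatorname{curl} = 0 $ for the remaining claims. You merely spell out the integration-by-parts identities in more detail than the paper does.
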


\begin{proof}
  The proof is essentially similar to that of
  \autoref{prop:dd_maxwell_conservation}. Given
  $ \varphi _h ^\prime \in V _h ^0 (K) $, taking
  $ A _h ^\prime = \operatorname{grad} \varphi _h ^\prime \in V _h ^1
  (K) $ in \eqref{eqn:hybrid_ampere} and integrating by parts,
  \begin{align*}
    0 &= \int _K \operatorname{grad} \varphi _h ^\prime \cdot ( \dot{ {D} } _h + J ) + \int _{ \partial K } ( \operatorname{grad} \varphi _h ^\prime \times \widehat{ H } _h ) \cdot \mathbf{n} \\
      &= \int _K ( \operatorname{grad} \varphi _h ^\prime \cdot \dot{ {D} } _h - \varphi _h ^\prime \operatorname{div} J ) + \int _{ \partial K }  \varphi _h ^\prime ( J - \operatorname{curl} \widehat{ H } _h ) \cdot \mathbf{n} \\
    &= \int _K ( \operatorname{grad} \varphi _h ^\prime \cdot \dot{ {D} } _h + \varphi _h ^\prime \dot{ \rho } ) - \int _{ \partial K } \varphi _h ^\prime \dot{ \widehat{ {D} } } _h \cdot \mathbf{n} ,
  \end{align*}
  so if \eqref{eqn:hybrid_divD} holds at the initial time, then it
  holds for all time. The conclusion that
  $ \int _{ \partial K } \widehat{ {D} } _h \cdot \mathbf{n} = \int _K
  \rho $ follows by taking $ \varphi _h ^\prime = \mathbf{1} _K $, and
  $ \operatorname{div} \dot{ \widehat{ {D} } } _h = \operatorname{div}
  \operatorname{curl} \widehat{ H } _h - \operatorname{div} J = 0 +
  \dot{ \rho } $ implies that if
  $ \operatorname{div} \widehat{ {D} } _h = \rho $ holds at the
  initial time, then it holds for all time.
\end{proof}

\begin{remark}
  Preservation of $ \operatorname{div} \widehat{ {D} } _h = \rho $ is
  immediate from
  $ \dot{ \widehat{ {D} } } _h + J = \operatorname{curl} \widehat{ H }
  _h $, without appealing to \eqref{eqn:hybrid_divD}.  However, it is
  only a meaningful statement about solutions to
  \eqref{eqn:hybrid_maxwell} when \eqref{eqn:hybrid_divD} holds.  By
  contrast, if $ \widehat{ {D} } _h $ were instead to satisfy
  $ \dot{ \widehat{ {D} } } _h + J = 0 $, then
  $ \operatorname{div} \widehat{ {D} } _h = \rho $ would still be
  preserved, but this would not say anything about the numerical
  solution $ ( A _h , \widehat{ H } _h ) $.
\end{remark}

The next result addresses the existence of initial conditions for
$ \widehat{ {D} } _h $ satisfying the hypotheses of the previous
theorem. Let
$ V _h ^0 \coloneqq \prod _{ K \in \mathcal{T} _h } V _h ^0 (K)
\subset H ^1 ( \mathcal{T} _h ) $.

\begin{proposition}
  \label{prop:initial_value_Dhat}
  Suppose that the initial value of $ {D} _h $ satisfies
  \begin{equation*}
    \sum _{ K \in \mathcal{T} _h } \int _K \operatorname{grad} \varphi _h ^\prime \cdot {D} _h + \int _\Omega \varphi _h ^\prime \rho = 0 , \qquad \forall \varphi _h ^\prime \in V _h ^0 \cap \mathring{ H } ^1 (\Omega) .
  \end{equation*}
  Then there exists an initial value for $ \widehat{ {D} } _h $ such
  that \eqref{eqn:hybrid_divD} holds for all $ K \in \mathcal{T} _h $
  and $ \operatorname{div} \widehat{ {D} } _h = \rho $.
\end{proposition}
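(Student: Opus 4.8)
The plan is to first produce a field with the correct weak normal traces on the element boundaries, and then correct its divergence without disturbing those traces. Rewrite the target condition \eqref{eqn:hybrid_divD} as $\int_{\partial K}\varphi_h'\,\widehat{D}_h\cdot\mathbf n = \ell_K(\varphi_h')$ for all $\varphi_h'\in V_h^0(K)$ and all $K$, where $\ell_K(\varphi_h')\coloneqq\int_K(\operatorname{grad}\varphi_h'\cdot D_h + \varphi_h'\rho)$ and $D_h$ denotes the given initial value. Introduce the weak-normal-trace operator $T\colon H(\operatorname{div};\Omega)\to\prod_{K}(V_h^0(K))^\ast$ sending $\lambda\mapsto\bigl(\varphi_h'\mapsto\int_{\partial K}\varphi_h'\,\lambda\cdot\mathbf n\bigr)_K$, so that the condition above reads $T\widehat{D}_h=\ell$ with $\ell\coloneqq(\ell_K)_K$.

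First I would establish the existence of some $\lambda\in H(\operatorname{div};\Omega)$ with $T\lambda=\ell$. Since the target $\prod_K(V_h^0(K))^\ast$ is finite-dimensional, $T$ has closed range, and by finite-dimensional duality $\ell\in\operatorname{range}T$ if and only if $\ell$ annihilates $\{(\psi_K)\in\prod_K V_h^0(K):\sum_K\int_{\partial K}\psi_K\,\lambda\cdot\mathbf n=0\text{ for all }\lambda\in H(\operatorname{div};\Omega)\}$. Each $\psi_K\in V_h^0(K)\subset H^1(K)$, so the collection $(\psi_K)$ glues to a function $\psi\in H^1(\mathcal T_h)\cap V_h^0$; by \citet[Proposition III.1.1]{BrFo1991} the defining condition of this set is exactly $\psi\in\mathring H^1(\Omega)$. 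Thus the set is $V_h^0\cap\mathring H^1(\Omega)$, and the requirement that $\ell$ annihilate it reads $\sum_K\int_K\operatorname{grad}\psi\cdot D_h+\int_\Omega\psi\rho=0$ for all $\psi\in V_h^0\cap\mathring H^1(\Omega)$, which is precisely the hypothesis. (In particular, the members of this set supported in a single element, obtained by extending elements of $V_h^0(K)\cap\mathring H^1(K)$ by zero, force the local consistency that makes $\ell_K$ depend only on the boundary trace.) Hence such a $\lambda$ exists.

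Next I would correct the divergence. Taking $\varphi_h'=\mathbf 1_K\in V_h^0(K)$ in $T\lambda=\ell$ gives $\int_{\partial K}\lambda\cdot\mathbf n=\ell_K(\mathbf 1_K)=\int_K\rho$, so $f\coloneqq\rho-\operatorname{div}\lambda$ satisfies $\int_K f=\int_K\rho-\int_{\partial K}\lambda\cdot\mathbf n=0$ for every $K$. On each $K$ this mean-zero condition lets me solve a local Neumann problem for $w_K\in H(\operatorname{div};K)$ with $\operatorname{div}w_K=f\rvert_K$ and $w_K\cdot\mathbf n\rvert_{\partial K}=0$ (for instance $w_K=\operatorname{grad}\zeta_K$ with $\Delta\zeta_K=f\rvert_K$ and $\partial_n\zeta_K=0$). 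Because every $w_K$ has vanishing normal trace, the glued field $w$ lies in $H(\operatorname{div};\Omega)$ with $Tw=0$ and $\operatorname{div}w=f$.

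Finally I would set $\widehat{D}_h\coloneqq\lambda+w$. Then $T\widehat{D}_h=T\lambda=\ell$, which is exactly \eqref{eqn:hybrid_divD}, while $\operatorname{div}\widehat{D}_h=\operatorname{div}\lambda+f=\rho$; the identity $\int_{\partial K}\widehat{D}_h\cdot\mathbf n=\int_K\rho$ then follows from testing with $\mathbf 1_K$. The main obstacle is the duality step: getting the bookkeeping right so that the pre-annihilator of $\operatorname{range}T$ is correctly identified, through \citet[Proposition III.1.1]{BrFo1991}, with $V_h^0\cap\mathring H^1(\Omega)$, after which the hypothesis is revealed to be exactly the solvability condition $\ell\in\operatorname{range}T$. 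The divergence correction is then routine, since the compatibility $\int_K f=0$ comes for free from testing with $\mathbf 1_K$.
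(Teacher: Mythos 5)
Your proposal is correct and follows essentially the same route as the paper: a closed-range/duality argument identifying the hypothesis as exactly the solvability condition for the weak-normal-trace equation (with the kernel recognized as $V_h^0\cap\mathring H^1(\Omega)$ via the Brezzi--Fortin characterization), followed by an element-local Neumann correction $\operatorname{grad}\zeta_K$ with vanishing normal trace to fix the divergence. The only cosmetic difference is that you phrase the first step via the pre-annihilator of $\operatorname{range}T$ in finite dimensions, where the paper invokes the closed range theorem for the bilinear form $\beta_h$ directly.
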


\begin{proof}
  The first part of the argument is similar to the one we used for the
  existence of $ \widehat{ H } _h $.  Define the bilinear form
  \begin{equation*}
    \beta _h \colon V _h ^0 \times H ( \operatorname{div}; \Omega ) \rightarrow \mathbb{R}  , \qquad \beta _h ( \varphi _h ^\prime , \widehat{ {D} } _h ) \coloneqq \sum _{ K \in \mathcal{T} _h } \int _{ \partial K } \varphi _h ^\prime \widehat{ {D} } _h \cdot \mathbf{n} .
  \end{equation*} 
  Since $ V _h ^0 \cap \mathring{ H } ^1 (\Omega) $ is the kernel of
  $ \varphi _h ^\prime \mapsto \beta _h ( \varphi _h ^\prime , \cdot )
  $, the closed range theorem implies that
  $ \varphi _h ^\prime \mapsto \sum _{ K \in \mathcal{T} _h } \int _K
  \operatorname{grad} \varphi _h ^\prime \cdot {D} _h + \int _\Omega
  \varphi _h ^\prime \rho $ is in the range of
  $ \widehat{ {D} } _h \mapsto \beta _h ( \cdot , \widehat{ {D} } _h )
  $. Hence, there exists an initial value for $ \widehat{ {D} } _h $
  satisfying \eqref{eqn:hybrid_divD} for all $K \in \mathcal{T} _h $.

  Next, suppose $ \widehat{ {D} } _h $ satisfies
  \eqref{eqn:hybrid_divD} but not necessarily
  $ \operatorname{div} \widehat{ {D} } _h = \rho $. Then, on each
  $ K \in \mathcal{T} _h $, replace $ \widehat{ {D} } _h $ by
  $ \widehat{ {D} } _h + \operatorname{grad} u $, where $u$ is the
  solution to
  $ - \Delta u = \operatorname{div} \widehat{ {D} } _h - \rho $ with
  Neumann boundary conditions
  $ \operatorname{grad} u \cdot \mathbf{n} = 0 $ on $ \partial K
  $. This leaves the normal traces of $ \widehat{ {D} } _h $
  unchanged, so the result is still in
  $ H ( \operatorname{div}; \Omega ) $ and satisfies
  \eqref{eqn:hybrid_divD}, as desired.
\end{proof}

\begin{remark}
  The computation of $ \widehat{ {D} } _h $, like that of
  $ \widehat{ H } _h $, can be seen as an optional post-processing
  step after computing the solution $ A _h $ to
  \eqref{eqn:hybrid_kerBh}. The key point of
  \autoref{thm:hybrid_maxwell_conservation} is that the evolution of
  $ A _h $ is conservative, in the sense that it is consistent with a
  charge-conserving numerical flux $ \widehat{ {D} } _h $, whether or
  not one chooses to actually compute $ \widehat{ {D} } _h $.
\end{remark}

\subsection{Hybridization of N\'ed\'elec's method and nonconforming
  methods}
\label{sec:hybrid_nedelec}

As in \autoref{sec:nedelec}, let $ \Omega $ be polyhedral and
$ \mathcal{T} _h $ be a simplicial triangulation. Let $ V _h ^0 (K) $
be the space of degree-$r$ polynomials on $K$ and $ V _h ^1 (K) $ be
either degree-$r$ N\'ed\'elec edge elements of the first kind or
degree-$(r-1)$ N\'ed\'elec edge elements of the second kind on
$K$. Then $ V _h ^0 \subset H ^1 ( \mathcal{T} _h ) $ and
$ V _h ^1 \subset H ( \operatorname{curl} ; \mathcal{T} _h ) $
correspond to discontinuous Lagrange and N\'ed\'elec elements,
respectively. Note that discontinuous N\'ed\'elec elements of the
second kind are just discontinuous piecewise polynomial vector fields.

Now, taking
$ \widehat{ V } _h ^1 = H ( \operatorname{curl}; \Omega ) $, it
follows that
$ \ker \mathcal{B} _h = V _h ^1 \cap \ker \mathcal{B} \subset
\mathring{ H } ( \operatorname{curl} ; \Omega ) $, which corresponds
precisely to curl-conforming N\'ed\'elec elements with tangential
inter-element continuity and boundary conditions. It follows that
\eqref{eqn:hybrid_kerBh} agrees precisely with N\'ed\'elec's method
\eqref{eqn:galerkin_ampere}. In fact, it is not necessary to take
$ \widehat{ V } _h ^1 $ infinite-dimensional: it suffices to take a
large enough finite-dimensional subspace (e.g., N\'ed\'elec elements
of sufficiently high degree) such that \eqref{eqn:hybrid_continuity_A}
imposes all the inter-element continuity and boundary conditions on
degrees of freedom of $ V _h ^1 $. (Having $ \widehat{ V } _h ^1 $
infinite-dimensional is not a problem if one is only interested in
$ A _h $, but a finite-dimensional subspace is required if one wishes
to compute $ \widehat{ H } _h $.) From these observations, we obtain
the following corollary of \autoref{thm:hybrid_maxwell_conservation}
and \autoref{prop:initial_value_Dhat}

\begin{corollary}
  Given $ V _h ^0 $ and $ V _h ^1 $ as above, there exists
  $ \widehat{ V } _h ^1 $ such that solutions $ A _h $ to
  N\'ed\'elec's method \eqref{eqn:galerkin_ampere} are equivalent to
  solutions $ ( A _h , \widehat{ H } _h ) $ to the hybrid method
  \eqref{eqn:hybrid_maxwell}. Consequently, given a solution to
  N\'ed\'elec's method, there exists $ \widehat{ {D} } _h $ satisfying
  $ \dot{ \widehat{ {D} } } _h + J = \operatorname{curl} \widehat{ H }
  _h $, which preserves the charge-conservation constraints
  \eqref{eqn:hybrid_divD} and
  $ \operatorname{div} \widehat{ {D} } _h = \rho $.
\end{corollary}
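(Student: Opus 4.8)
The plan is to build directly on the two observations recorded just before the statement. Taking $\widehat{V}_h^1 = H(\operatorname{curl};\Omega)$ already yields $\ker\mathcal{B}_h = V_h^1 \cap \mathring{H}(\operatorname{curl};\Omega)$ by \autoref{prop:hcurl}, so the only genuinely new work is to exhibit a \emph{finite-dimensional} $\widehat{V}_h^1$ imposing the same conditions, since that is what makes the method computable. First I would examine what $b$ detects: for $A_h \in V_h^1$ the form $b(A_h, \widehat{H}_h^\prime) = -\sum_K \int_{\partial K}(A_h \times \widehat{H}_h^\prime)\cdot\mathbf{n}$ pairs, on each interior face, the tangential jump of $A_h$ against the single-valued tangential trace of $\widehat{H}_h^\prime$, and on each boundary face, the tangential trace of $A_h$ against that of $\widehat{H}_h^\prime$. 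Because the elements of $V_h^1$ are piecewise polynomials, these jumps and boundary traces are face-wise polynomial tangent fields of degree at most $r$. I would therefore take $\widehat{V}_h^1$ to be curl-conforming N\'ed\'elec elements of degree high enough that their tangential face traces surject onto this polynomial space on every face. Then $b(A_h, \widehat{H}_h^\prime) = 0$ for all $\widehat{H}_h^\prime \in \widehat{V}_h^1$ forces every jump and boundary trace to be $L^2$-orthogonal to a polynomial space containing it, hence to vanish, giving $\ker\mathcal{B}_h = V_h^1 \cap \mathring{H}(\operatorname{curl};\Omega)$.

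With this $\widehat{V}_h^1$ fixed, I would identify \eqref{eqn:hybrid_kerBh} with N\'ed\'elec's method \eqref{eqn:galerkin_ampere}. For $A_h^\prime \in \ker\mathcal{B}_h = V_h^1 \cap \mathring{H}(\operatorname{curl};\Omega)$ the boundary terms drop and the piecewise curl is the global curl, so $a(A_h, A_h^\prime) = \int_\Omega \operatorname{curl} A_h^\prime \cdot H_h$, and \eqref{eqn:hybrid_kerBh} becomes exactly \eqref{eqn:galerkin_ampere} on the conforming space. Hence $A_h$ solves N\'ed\'elec's method if and only if $A_h \in \ker\mathcal{B}_h$ solves \eqref{eqn:hybrid_kerBh}. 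To promote such an $A_h$ to a full hybrid solution $(A_h, \widehat{H}_h)$, I would reuse the closed-range argument of \autoref{sec:hybrid_variational}: the functional $A_h^\prime \mapsto \langle \dot{D}_h + J, A_h^\prime\rangle - a(A_h, A_h^\prime)$ vanishes on $\ker\mathcal{B}_h$, hence lies in the range of $\mathcal{B}_h^\ast$, so there exists $\widehat{H}_h \in \widehat{V}_h^1$ satisfying \eqref{eqn:hybrid_abstract_a}, i.e.\ \eqref{eqn:hybrid_ampere}. This yields the claimed equivalence of solutions in both directions.

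For the consequence, I would assemble \autoref{prop:initial_value_Dhat} and \autoref{thm:hybrid_maxwell_conservation}. I would first observe that the hypothesis of \autoref{prop:initial_value_Dhat}, namely $\sum_K \int_K \operatorname{grad}\varphi_h^\prime \cdot D_h + \int_\Omega \varphi_h^\prime \rho = 0$ for all $\varphi_h^\prime \in V_h^0 \cap \mathring{H}^1(\Omega)$, is precisely the discrete charge-conservation condition on the N\'ed\'elec initial data discussed in \autoref{sec:nedelec} (and preserved by its flow), since here $V_h^0 \cap \mathring{H}^1(\Omega)$ is exactly the continuous Lagrange space vanishing on $\partial\Omega$ used there. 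Granting this, \autoref{prop:initial_value_Dhat} supplies an initial $\widehat{D}_h$ satisfying \eqref{eqn:hybrid_divD} and $\operatorname{div}\widehat{D}_h = \rho$, and \autoref{thm:hybrid_maxwell_conservation} then propagates both constraints for all time under the evolution $\dot{\widehat{D}}_h + J = \operatorname{curl}\widehat{H}_h$.

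The main obstacle is the finite-dimensional construction in the first paragraph: in contrast to the infinite-dimensional case, where \autoref{prop:hcurl} applies verbatim, one must verify that a finite-dimensional $\widehat{V}_h^1$ detects every nonzero tangential jump of $V_h^1$, which rests on the surjectivity of the tangential face traces of sufficiently high-degree N\'ed\'elec elements onto the relevant polynomial spaces. Everything after this reduction is bookkeeping together with the already-established \autoref{thm:hybrid_maxwell_conservation} and \autoref{prop:initial_value_Dhat}.
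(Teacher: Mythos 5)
Your proposal is correct and follows essentially the same route as the paper, which treats this corollary as a direct consequence of the discussion immediately preceding it: choose $\widehat{V}_h^1$ large enough that \eqref{eqn:hybrid_continuity_A} imposes all tangential inter-element and boundary conditions on the degrees of freedom of $V_h^1$, so that $\ker\mathcal{B}_h$ is the conforming N\'ed\'elec space and \eqref{eqn:hybrid_kerBh} coincides with \eqref{eqn:galerkin_ampere}, then recover $\widehat{H}_h$ by the closed-range argument and invoke \autoref{thm:hybrid_maxwell_conservation} and \autoref{prop:initial_value_Dhat}. Your explicit face-trace surjectivity argument for a finite-dimensional $\widehat{V}_h^1$ and your verification that the N\'ed\'elec initial data satisfies the hypothesis of \autoref{prop:initial_value_Dhat} merely fill in details the paper leaves implicit.
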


In contrast, if $ \widehat{ V } _h ^1 $ is not sufficiently large, we
will have
$ \ker \mathcal{B} _h \not\subset \ker \mathcal{B} = \mathring{ H } (
\operatorname{curl}; \Omega ) $, so \eqref{eqn:hybrid_kerBh} is a
nonconforming finite element method for Maxwell's equations.

\subsection{Remarks on the two-dimensional case}
\label{sec:2D}

This framework may also be adapted to two-dimensional electromagnetics
with minor modifications.

For the non-domain-decomposed problem on
$ \Omega \subset \mathbb{R}^2 $, the potential
$A \in \mathring{ H } ( \operatorname{curl}; \Omega ) $ remains a
vector field, although $ \operatorname{curl} A \in L ^2 (\Omega) $
becomes a scalar field. Consequently, $E$ and $D$ remain vector fields
(and $\epsilon$ remains a tensor), while $B$ and $H$ become scalar
fields (and $\mu$ becomes scalar). The two-dimensional version of the
weak problem \eqref{eqn:weak_ampere} is nearly identical, except the
dot product $ \operatorname{curl} A ^\prime \cdot H $ is replaced by
the ordinary product $ ( \operatorname{curl} A ^\prime ) H $. For the
Galerkin semidiscretization discussed in \autoref{sec:nedelec}, one
simply replaces the N\'ed\'elec edge elements of the first and second
kind with Raviart--Thomas (RT) \citep{RaTh1977} and
Brezzi--Douglas--Marini (BDM) \citep{BrDoMa1985} edge elements,
respectively. These two-dimensional $ H ( \operatorname{curl} ) $
elements are just the RT and BDM
$ H ( \operatorname{div} ) $ elements rotated by 90 degrees, so that
tangential traces of the former correspond to normal traces of the
latter.

For domain decomposition, \autoref{prop:hcurl} is easily modified to
show that
\begin{equation*}
  \mathring{ H } ( \operatorname{curl} ; \Omega ) = \bigl\{ u \in H
  ( \operatorname{curl}; \mathcal{T} _h ) : \sum _{ K \in \mathcal{T}
    _h } \int _{ \partial K } u \lambda \times  \mathbf{n} = 0
  , \text{ for all } \lambda \in H ^1 ( \Omega ) \bigr\} .
\end{equation*}
Alternatively, this can be seen to follow from the corresponding
result for $ \mathring{ H } ( \operatorname{div}; \Omega ) $, where
the vector fields are rotated by 90 degrees. Hence, the domain
decomposed variational problem in temporal gauge is to find
$ t \mapsto A (t) \in H ( \operatorname{curl} ; \mathcal{T} _h ) $ and
$ t \mapsto \widehat{ H } (t) \in H ^1 ( \Omega ) $ such that
\begin{alignat*}{2}
  \int _K \bigl(  A ^\prime \cdot ( \dot{ {D} } + J ) - ( \operatorname{curl} A ^\prime ) H \bigr)  + \int _{ \partial K } A ^\prime \widehat{ H } \times \mathbf{n} &= 0 , \qquad & \forall A ^\prime & \in H ( \operatorname{curl} ; K ) ,\\
  \sum _{ K \in \mathcal{T} _h } \int _{ \partial K } A \widehat{ H }
  ^\prime \times \mathbf{n} &= 0 , \qquad & \forall \widehat{ H }
  ^\prime &\in H ^1 (\Omega) ,
\end{alignat*}
for all $ K \in \mathcal{T} _h $. Hybrid methods may then be obtained
by restricting this variational problem to subspaces
$ V _h ^1 = \prod _{ K \in \mathcal{T} _h } V _h ^1 (K) \subset H (
\operatorname{curl} ; \mathcal{T} _h ) $ and
$ \widehat{ V } _h ^0 \subset H ^1 (\Omega) $. As in
\autoref{sec:charge_conservation}, one obtains
$ \widehat{ {D} } _h (t) \in H ( \operatorname{div}; \Omega ) $ by
solving
$ \dot{ \widehat{ {D} } } _h + J = \operatorname{curl} \widehat{ H }
_h $ (where the curl of a scalar field is its gradient rotated by 90
degrees, i.e.,
$ v \cdot \operatorname{curl} \widehat{ H } _h \coloneqq v \times
\operatorname{grad} \widehat{ H } _h $ for $ v \in \mathbb{R}^2 $),
and the charge-conserving properties follow in the same manner.

For the finite element spaces, one may take $ V _h ^0 $ to be
discontinuous degree-$r$ Lagrange elements and $ V _h ^1 $ to be
discontinuous degree-$r$ RT edge elements or discontinuous
degree-$(r-1)$ BDM edge elements. (Note that discontinuous BDM
elements are just discontinuous piecewise polynomial vector fields.)
In this case, it is much easier to see which
$ \widehat{ V } _h ^0 \subset H ^1 (\Omega) $ yield conforming
methods, since each edge degree of freedom is either shared by exactly
two triangles or lies on the boundary. Both the degree-$r$ RT and
degree-$(r-1)$ BDM elements have $r$ degrees of freedom per edge,
which match up precisely with those for degree-$(r+1)$ Lagrange
elements. Hence, taking $ \widehat{ V } _h ^0 $ corresponding to
degree-$(r+1)$ or higher Lagrange elements yields a conforming
method. On the other hand, a straightforward counting argument shows
that degree-$r$ Lagrange elements have fewer than
$ r \times \text{\#edges} $ degrees of freedom on element boundaries
(unless $ \mathcal{T} _h $ consists of a single triangle). Since it is
impossible to enforce all of the inter-element and boundary conditions
in this case, the resulting method is nonconforming.

\section{Numerical examples}
\label{sec:numerical}

This section gives numerical illustrations for the simple test problem
\begin{equation}
  \label{eqn:curlcurl_time}
  \ddot{ A } + \operatorname{curl} \operatorname{curl} A = 0 ,
\end{equation}
which corresponds to the case where $\epsilon$ and $\mu$ are positive
constants with $ \epsilon \mu = 1 $ and $ J = 0 $, as discussed at the
end of \autoref{sec:maxwell_intro}. As before, $A$ is taken to have
vanishing tangential component on the boundary. Preservation of the
charge-conservation constraint is equivalent to the condition
$ \operatorname{div} \ddot{ A } = 0 $.

In the frequency domain, denoting angular frequency by $\omega$, time
differentiation becomes multiplication by $ i \omega $, so
\eqref{eqn:curlcurl_time} becomes the eigenvalue problem
\begin{equation}
  \label{eqn:curlcurl_frequency}
  \operatorname{curl} \operatorname{curl} A = \omega ^2 A .
\end{equation}
In this setting, preservation of the charge-conservation constraint
becomes $ \omega ^2 \operatorname{div} A = 0 $, i.e., eigenfunctions
with nonzero eigenvalue are divergence-free.

The examples below demonstrate the constraint-preserving properties of
the curl-conforming hybridized N\'ed\'elec method from
\autoref{sec:hybrid}, both in the time domain and in the frequency
domain. In the frequency domain, we also observe superconvergence of
$ \widehat{ H } _h \rightarrow H $. All finite element computations
were performed using FEniCS
\citep{LoMaWe2012,AlBlHaJoKeLoRiRiRoWe2015}. For the post-processing
step of computing $ \widehat{ H } _h $, whose solution is not unique,
we find the solution $ \widehat{ H } _h $ minimizing
$ \lVert H _h - \widehat{ H } _h \rVert ^2 + \lVert \dot{ {D} } _h + J
- \operatorname{curl} \widehat{ H } _h \rVert ^2 $, as previously
discussed in \autoref{sec:hybrid_variational}.

\subsection{Time domain}

Before turning our attention to the test problem
\eqref{eqn:curlcurl_time}, we first describe a discrete time-stepping
scheme for the general case of Maxwell's equations. After
semidiscretizing using the hybridized N\'ed\'elec method of
\autoref{sec:hybrid}, we discretize in time using the following
explicit ``leapfrog'' scheme:
\begin{itemize}

\item
  $ A _{ n + 1/2 } = A _n - \frac{1}{2} \Delta t \epsilon ^{-1} {D} _n
  $.

\item $ {D} _{ n + 1 } = {D} _n + \Delta t \dot{ {D} } _{ n + 1/2 } $,
  where $ \dot{D} _{ n + 1/2 } \in \ker \mathcal{B} _h $ is the solution to
  \begin{equation*}
    \langle \dot{ {D} } _{ n + 1/2 }  + J _{ n + 1/2 } , A _h ^\prime \rangle = a ( A _{n+1/2} , A _h  ^\prime ) , \qquad \forall A _h  ^\prime \in \ker \mathcal{B} _h.
  \end{equation*}

\item
  $ \widehat{ {D} } _{ n + 1 } = \widehat{ {D} } _n + \Delta t (
  \operatorname{curl} \widehat{ H } _{ n + 1/2 } - J _{ n + 1/2} ) $,
  where $ \widehat{ H } _{ n + 1/2 } $ is the solution to
  \begin{equation*}
    \langle \dot{ {D} } _{n+1/2} + J_{n+1/2} , A _h ^\prime \rangle = a ( A _{n+1/2} , A _h  ^\prime ) + b ( A _h ^\prime, \widehat{ H } _{n+1/2} ) , \qquad \forall A _h  ^\prime \in V _h ^1 ,
  \end{equation*}
  minimizing
  $ \lVert H _{n+1/2} - \widehat{ H } _{n+1/2} \rVert ^2 + \lVert
  \dot{ {D} } _{n+1/2} + J _{n+1/2} - \operatorname{curl} \widehat{ H
  } _{n+1/2} \rVert ^2 $.
  
\item
  $ A _{ n + 1 } = A _{n+1/2} - \frac{1}{2} \Delta t \epsilon ^{-1}
  {D} _{n+1} $.

\end{itemize}
Here, $ A _n $ denotes the approximation to $ A _h ( t _n ) $, where
$ t _n $ is the $n$th time step and $ \Delta t $ is the time step
size; similar notation is used for the other variables. This is
essentially the St\"ormer/Verlet method for the semidiscretized system
of ODEs \eqref{eqn:hybrid_kerBh}, augmented by a hybrid
post-processing step for $ \widehat{ H } _h $ and
$ \widehat{ {D} } _h $. Except for the hybrid post-processing step,
which is novel, such leapfrog schemes are widely used for both finite
element and finite difference time domain methods in computational
electromagnetics (see \citet{Yee1966} and \citet[Section
5]{Monk1991}). The St\"ormer/Verlet method also has particularly
desirable properties when applied to Lagrangian and Hamiltonian
dynamics (cf.~\citet{HaLuWa2003,HaLuWa2006}).

\begin{figure}
  \centering
  \includegraphics[scale=.8]{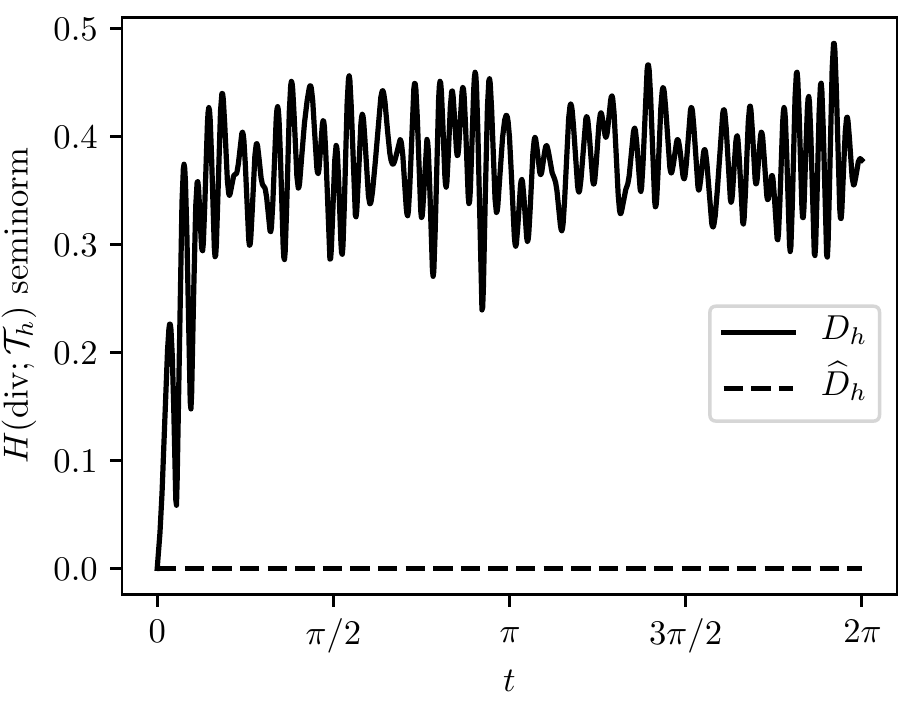} \hfill
  \includegraphics[scale=.8]{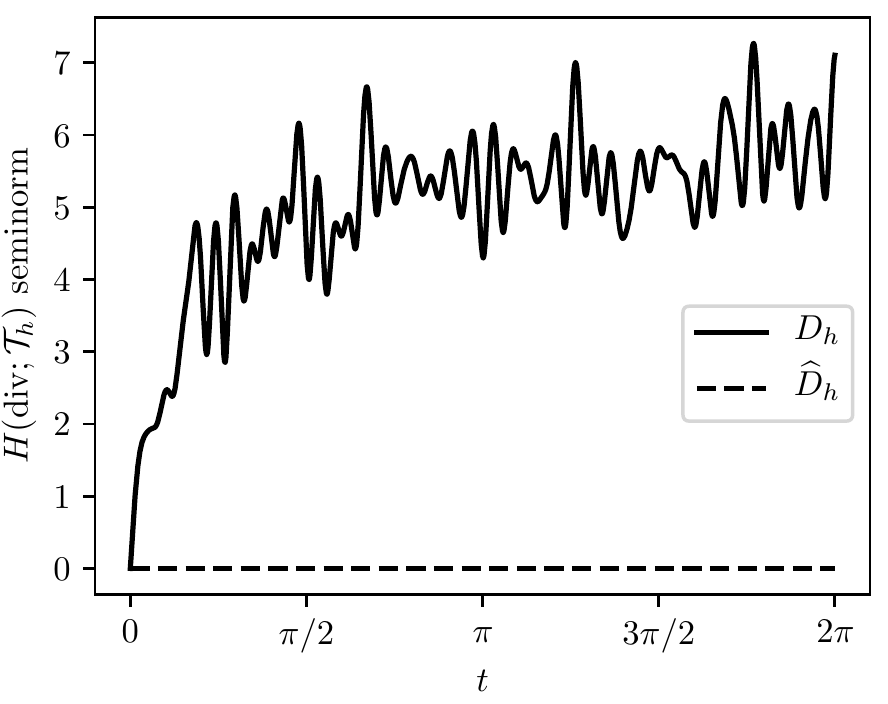}
  \caption{Charge conservation error, as measured by the
    $ H ( \operatorname{div} ; \mathcal{T} _h ) $ seminorm of
    $ {D} _h $ and $ \widehat{ {D} } _h $, over time, on the 2-{D} square
    $ \Omega = ( 0, \pi ) ^2 $ (left) and 3-{D} cube
    $ \Omega = ( 0, \pi ) ^3 $ (right). Although $ {D} _h $ drifts
    away from the constraint, $ \widehat{ {D} } _h $ preserves the
    constraint to machine precision.\label{fig:time}}
\end{figure}

\autoref{fig:time} shows the results of applying this method to the
test problem \eqref{eqn:curlcurl_time} on the 2-{D} square
$ \Omega = ( 0, \pi ) ^2 $ and 3-{D} cube $ \Omega = ( 0, \pi ) ^3 $,
taking $ \epsilon = \mu = 1 $. For both the 2-{D} and 3-{D} problems,
we simulate over $ t \in [ 0, 2 \pi ] $ for $ 1024 $ time steps of
size $ \Delta t = \pi / 512 $.

For the 2-{D} problem, the initial conditions are taken to be
$ {D} _0 = \widehat{ {D} } _0 = 0 $ and
\begin{equation*}
  A _0 ( x, y ) = \bigl( y ( \pi - y ) , x ( \pi - x ) \bigr) .
\end{equation*} 
A uniform triangular mesh is taken on a $ 16 \times 16 $ grid, with
$ 2 \cdot 16 ^2 = 512 $ cells. The space $ V _h ^1 $ consists of
discontinuous piecewise linear vector fields, while
$ \widehat{ V } _h ^0 $ consists of cubic Lagrange elements, so that
$ \ker \mathcal{B} _h \subset V _h ^1 $ are linear BDM edge elements,
as described in \autoref{sec:2D} with $ r = 2 $.

For the 3-{D} problem, the initial conditions are taken to be
$ {D} _0 = \widehat{ {D} } _0 = 0 $ and
\begin{equation*}
  A _0 ( x, y, z ) = \bigl( y ( \pi - y ) z ( \pi - z ) , z ( \pi - z ) x ( \pi - x ), x ( \pi - x ) y ( \pi - y ) \bigr) .
\end{equation*} 
A uniform tetrahedral mesh is taken on an $ 8 \times 8 \times 8 $
grid, with $ 6 \cdot 8 ^3 = 3072 $ cells. The space $ V _h ^1 $
consists of discontinuous piecewise linear vector fields, while
$ \widehat{ V } _h ^1 $ consists of cubic N\'ed\'elec edge elements of
the second kind, so that $ \ker \mathcal{B} _h \subset V _h ^1 $ are
linear N\'ed\'elec edge elements of the second kind, as described in
\autoref{sec:hybrid_nedelec} with $ r = 2 $.

Although the exact solution satisfies $ \operatorname{div} {D} = 0 $,
the numerical solution $ {D} _h $ drifts away from this constraint, as
measured by the $ H ( \operatorname{div} ; \mathcal{T} _h ) $ seminorm,
\begin{equation*}
  \lvert {D} _h \rvert _{ H ( \operatorname{div}; \mathcal{T} _h ) } \coloneqq \sqrt{  \sum _{ K \in \mathcal{T} _h } \lVert \operatorname{div} {D} _h \rVert ^2 _{ L ^2 (K) } } .
\end{equation*}
However, $ \operatorname{div} \widehat{ {D} } _h = 0 $ holds to
machine precision, as explained by
\autoref{thm:hybrid_maxwell_conservation}. Looking at $ {D} _h $
alone, one might think that this method fails to preserve the
charge-conservation constraint strongly. In fact, we have illustrated
that it actually \emph{does} preserve this constraint, when expressed
in terms of the numerical flux $ \widehat{ {D} } _h $ rather than
$ {D} _h $.

\begin{remark}
  The constraint behavior of $ {D} _h $ and $ \widehat{ {D} } _h $,
  observed in \autoref{fig:time}, is due to the finite element
  semidiscretization, not the time discretization.  Indeed, the
  charge-conservation constraint is linear, so if it holds for the
  semidiscretized system of ODEs, then any Runge--Kutta or partitioned
  Runge--Kutta method preserves it (\citet[Theorem
  IV.1.2]{HaLuWa2006}).
\end{remark}

\subsection{Frequency domain}

We next apply the hybrid approach to the frequency domain, again
assuming that $\epsilon$ and $\mu$ are positive constants with
$ \epsilon \mu = 1 $ and $ J = 0 $. This is done by first
approximating the Maxwell eigenvalue problem
\eqref{eqn:curlcurl_frequency} on $ \ker \mathcal{B} _h $ and then
applying hybrid post-processing, as follows:
\begin{itemize}
\item Find eigenpairs
  $ ( \omega _h ^2 , A _h ) \in \mathbb{R} ^{ + } \times \ker
  \mathcal{B} _h $ satisfying
  \begin{equation*}
    a ( A _h , A _h ^\prime ) = \omega _h ^2 \langle A _h , A _h ^\prime \rangle , \qquad \forall A _h ^\prime \in \ker \mathcal{B} _h ,
  \end{equation*}
  and let $ H _h \coloneqq \mu ^{-1} \operatorname{curl} A _h $ and
  $ {D} _h \coloneqq \epsilon ( - i \omega _h A _h ) $.

\item Find $ \widehat{ H } _h $ minimizing
  $ \lVert H _h - \widehat{ H } _h \rVert ^2 + \lVert i \omega _h {D}
  _h - \operatorname{curl} \widehat{ H } _h \rVert ^2 $ such that
  \begin{equation*}
    a ( A _h , A _h ^\prime ) + b ( A _h ^\prime , \widehat{ H } _h ) = \omega _h ^2 \langle A _h , A _h ^\prime \rangle , \qquad \forall A _h ^\prime \in V _h ^1 ,
  \end{equation*}
  and let
  $ \widehat{ {D} } _h \coloneqq - i \omega _h ^{-1}
  \operatorname{curl} \widehat{ H } _h $.
\end{itemize}
Note that this last step is equivalent to
$ i \omega _h \widehat{ {D} } _h = \operatorname{curl} \widehat{ H }
_h $, so $ \widehat{ H } _h $ can be seen as minimizing
$ \lVert H _h - \widehat{ H } _h \rVert ^2 + \omega _h ^2 \lVert {D}
_h - \widehat{ {D} } _h \rVert ^2 $.

\begin{figure}
  \centering
  \includegraphics{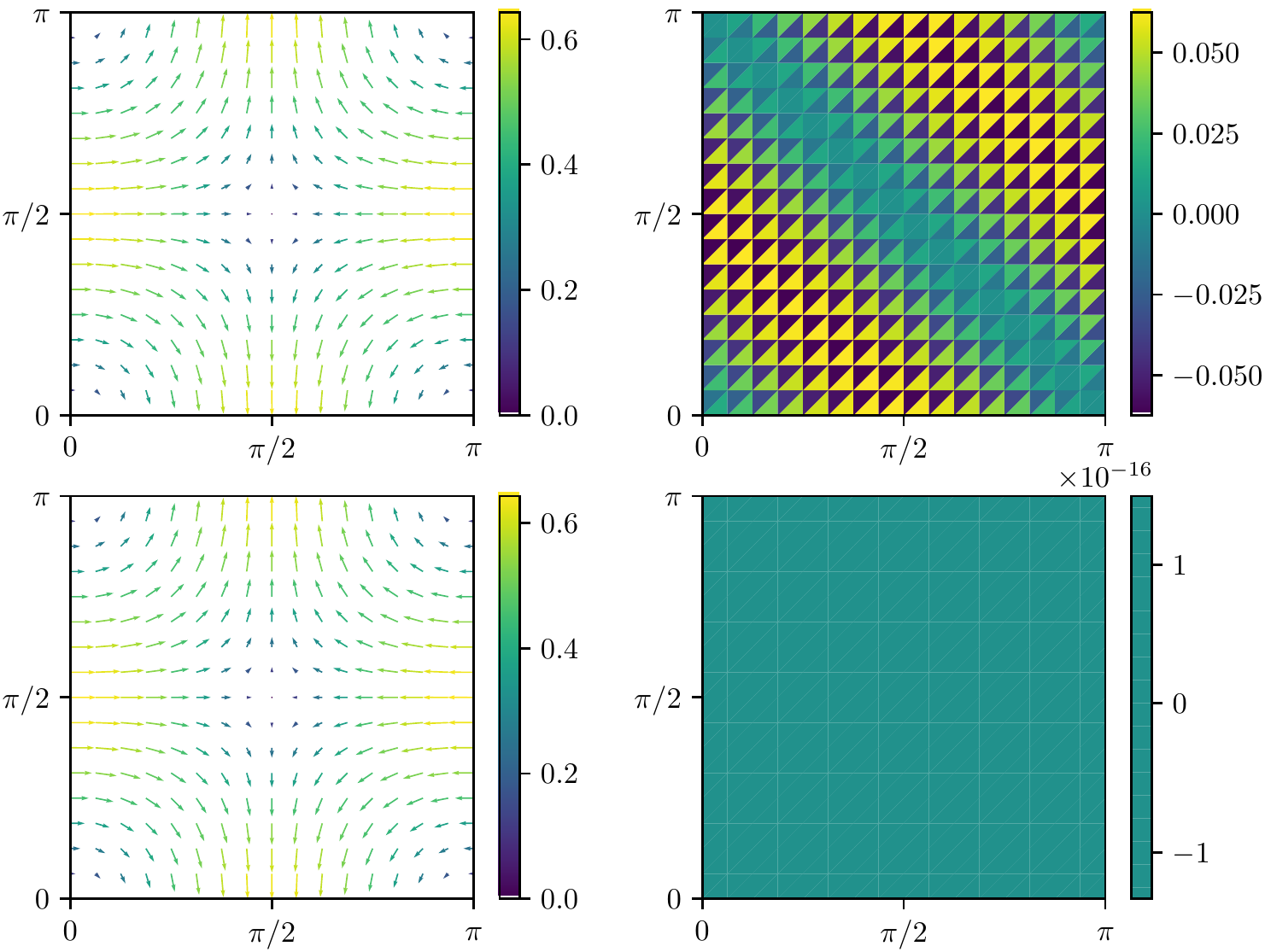}
  \caption{$ {D} _h $ and $ \operatorname{div} {D} _h $ (top row),
    compared to $ \widehat{ {D} } _h $ and
    $ \operatorname{div} \widehat{ {D} } _h $ (bottom row),
    approximating the Maxwell eigenmode with $\omega ^2 = 2$. While
    $ {D} _h $ and $ \widehat{ {D} } _h $ are nearly indistinguishable
    (left column), $ {D} _h $ fails to be strongly divergence-free,
    while $ \widehat{ {D} } _h $ is divergence-free to machine
    precision (right column).\label{fig:frequency}}
\end{figure}

We consider the 2-{D} square $ \Omega = ( 0, \pi ) ^2 $, where the
exact eigenvalues are sums of squares
($ \omega ^2 = 1, 1, 2, 4, 4, \ldots $). For simplicity, we look at
the approximation of the following analytical solution with simple
eigenvalue $ \omega ^2 = 2 $, assuming $ \epsilon = \mu = 1 $:
\begin{align*}
  A(x,y) &= \frac{ \sqrt{ 2 } } { \pi } ( -\cos x \sin y, \sin x \cos y ) ,\\
  H(x,y) &= \frac{ 2 \sqrt{ 2 } }{ \pi } \cos x \cos y ,\\
  {D} (x,y) &= \frac{ 2 i }{ \pi } ( \cos x \sin y, - \sin x \cos y ) .
\end{align*}
We take a uniform triangle mesh on an $ N \times N $ grid, which has
$ 2 N ^2 $ cells. As described in \autoref{sec:2D}, we take
$ V _h ^1 $ to consist of discontinuous piecewise degree-$(r-1)$
vector fields and $ \widehat{ V } _h ^0 $ to consist of
degree-$ (r+1) $ Lagrange elements, so that
$ \ker \mathcal{B} _h \subset V _h ^1 $ are degree-$(r-1)$
BDM edge elements.

\autoref{fig:frequency} shows $ {D} _h $ and $ \widehat{ {D} } _h $,
along with $ \operatorname{div} {D} _h $ and
$ \operatorname{div} \widehat{ {D} } _h $, for the case $ N = 16 $,
$ r = 2 $. Here, by $ \operatorname{div} {D} _h \in L ^2 (\Omega) $,
we mean the element-wise divergence
$ (\operatorname{div} {D} _h) \rvert _K \coloneqq \operatorname{div} (
{D} _h \rvert _K ) $ for each $ K \in \mathcal{T} _h $, since
$ {D} _h $ is in $ H ( \operatorname{div} ; \mathcal{T} _h ) $ but not
in $ H ( \operatorname{div} ; \Omega ) $. Although the vector fields
$ {D} _h $ and $ \widehat{ {D} } _h $ appear very similar, they behave
very differently with respect to the charge-conservation constraint:
$ \operatorname{div} {D} _h \neq 0 $, while
$ \operatorname{div} \widehat{ {D} } _h = 0 $ to machine
precision. Note that these are purely imaginary when $ A _h $ is real,
so the imaginary parts are plotted.

\begin{table}
  \centering
  \begin{tabular}{|l|c|c|c|c|c|c|c|c|c|}
\hline
\multirow{2}{*}{$r$} & mesh & 
\multicolumn{2}{|c|}{$\lVert H_h - H \rVert$} &
\multicolumn{2}{|c|}{$\lVert \widehat{H}_h - H \rVert$} &
\multicolumn{2}{|c|}{$\lVert D_h - D \rVert$} &
\multicolumn{2}{|c|}{$\lVert \widehat{D}_h - D \rVert$ \rule{0pt}{2.5ex}} \\
\cline{2-10}
& $N$ & error & rate & error & rate & error & rate & error & rate \\
\hline
\multirow{5}{*}{2}
 & 2 & 7.591e-01 & --- & 3.648e-01 & --- & 4.324e-01 & --- & 4.644e-01 & --- \\
 & 4 & 3.778e-01 & 1.007 & 1.070e-01 & 1.770 & 1.182e-01 & 1.872 & 1.342e-01 & 1.791 \\
 & 8 & 1.862e-01 & 1.021 & 2.753e-02 & 1.958 & 3.009e-02 & 1.974 & 3.512e-02 & 1.934 \\
 & 16 & 9.271e-02 & 1.006 & 6.926e-03 & 1.991 & 7.558e-03 & 1.993 & 8.906e-03 & 1.979 \\
 & 32 & 4.630e-02 & 1.002 & 1.734e-03 & 1.998 & 1.892e-03 & 1.998 & 2.236e-03 & 1.994 \\
\hline
\multirow{5}{*}{3}
 & 2 & 2.090e-01 & --- & 3.500e-02 & --- & 7.521e-02 & --- & 8.055e-02 & --- \\
 & 4 & 5.517e-02 & 1.922 & 2.750e-03 & 3.670 & 9.817e-03 & 2.938 & 9.960e-03 & 3.016 \\
 & 8 & 1.400e-02 & 1.978 & 1.827e-04 & 3.912 & 1.225e-03 & 3.002 & 1.220e-03 & 3.029 \\
 & 16 & 3.515e-03 & 1.994 & 1.159e-05 & 3.978 & 1.526e-04 & 3.005 & 1.512e-04 & 3.013 \\
 & 32 & 8.796e-04 & 1.999 & 7.270e-07 & 3.995 & 1.903e-05 & 3.003 & 1.882e-05 & 3.006 \\
\hline
\multirow{5}{*}{4}
 & 2 & 4.614e-02 & --- & 4.327e-03 & --- & 1.281e-02 & --- & 1.316e-02 & --- \\
 & 4 & 6.121e-03 & 2.914 & 1.250e-04 & 5.114 & 7.958e-04 & 4.008 & 8.629e-04 & 3.931 \\
 & 8 & 7.769e-04 & 2.978 & 3.759e-06 & 5.055 & 4.913e-05 & 4.018 & 5.500e-05 & 3.972 \\
 & 16 & 9.749e-05 & 2.994 & 1.155e-07 & 5.024 & 3.048e-06 & 4.011 & 3.454e-06 & 3.993 \\
 & 32 & 1.220e-05 & 2.999 & 3.582e-09 & 5.011 & 1.898e-07 & 4.006 & 2.160e-07 & 3.999 \\
\hline
\multirow{5}{*}{5}
 & 2 & 8.100e-03 & --- & 4.419e-04 & --- & 1.737e-03 & --- & 1.761e-03 & --- \\
 & 4 & 5.354e-04 & 3.919 & 6.307e-06 & 6.131 & 5.553e-05 & 4.968 & 5.321e-05 & 5.048 \\
 & 8 & 3.394e-05 & 3.980 & 9.434e-08 & 6.063 & 1.743e-06 & 4.993 & 1.642e-06 & 5.018 \\
 & 16 & 2.129e-06 & 3.995 & 1.447e-09 & 6.027 & 5.449e-08 & 5.000 & 5.105e-08 & 5.007 \\
 & 32 & 1.332e-07 & 3.999 & 2.404e-11 & 5.911 & 1.702e-09 & 5.000 & 1.592e-09 & 5.003 \\
\hline
\end{tabular}

  \vskip\baselineskip
  \caption{Convergence of the hybridized method for the
    $ \omega ^2 = 2 $ eigenmode of $ \Omega = ( 0, \pi ) ^2 $, using a
    uniform triangle mesh on an $ N \times N $ grid and degree-$(r-1)$
    BDM edge elements. The post-processed solution
    $ \widehat{ H } _h $ exhibits superconvergence relative to
    $ H _h $, while the errors and convergence rates of
    $ \widehat{ {D} } _h $ are comparable to those of
    $ {D} _h $.\label{tab:frequency}}
\end{table}

\autoref{tab:frequency} illustrates the convergence behavior of
$ H _h $, $ \widehat{ H } _h $, $ {D} _h $, and $ \widehat{ {D} } _h $
as the mesh parameter $ h \rightarrow 0 $, for elements of various
degrees. Since $ A _h $ is simply obtained by using degree-$(r-1)$ BDM
edge elements for the Maxwell eigenvalue problem, previous analyses of
this problem (e.g.,
\citep{Kikuchi1989,Hiptmair2002,Boffi2006,Boffi2007,ArFaWi2010} and
references therein) show that
$ \lVert A _h - A \rVert = \mathcal{O} ( h ^r ) $ and
$ \lVert \operatorname{curl} A _h - \operatorname{curl} A \rVert =
\mathcal{O} ( h ^{ r -1 } ) $, which imply the observed rates
$ \lVert {D} _h - {D} \rVert = \mathcal{O} ( h ^r ) $ and
$ \lVert H _h - H \rVert = \mathcal{O} ( h ^{ r -1 } )
$. Interestingly, for $ \widehat{ H } _h $ obtained by hybrid
post-processing, we observe the superconvergent rates
$ \lVert \widehat{ H } _h - H \rVert = \mathcal{O} ( h ^r ) $ for
$ r = 2 $ and $ \mathcal{O} ( h ^{ r + 1 } ) $ for $ r > 2 $. For
$ \widehat{ {D} } _h $, we observe errors comparable to those for
$ {D} _h $ and the same convergence rate,
$ \lVert \widehat{ {D} } _h - {D} \rVert = \mathcal{O} ( h ^r ) $.

We note that the observed rates of superconvergence, including the
reduced rate in the lowest-degree case, are the same as those obtained
for scalar elliptic problems by \citet*{BrDoMa1985} in the original
paper on the hybridized BDM method.

\section{Conclusion}

We have constructed a family of primal hybrid finite element methods
for Maxwell's equations, where the Lagrange multipliers enforcing
inter-element continuity and boundary conditions correspond to a
numerical trace $ \widehat{ H } _h $ of the magnetic field and a
numerical flux $ \widehat{ {D} } _h $ of the electric flux
density. These methods \emph{strongly} preserve the constraints
$ \operatorname{div} B _h = 0 $ and
$ \operatorname{div} \widehat{ {D} } _h = \rho $, the latter of which
corresponds to conservation of charge. As a special case, these
methods include hybridized versions of standard methods using
curl-conforming edge elements, which had previously been thought only
to be charge-conserving in a much weaker sense. We emphasize that
these conservative properties hold even if the methods are not
implemented in a hybrid fashion: if desired, $ \widehat{ H } _h $ and
$ \widehat{ {D} } _h $ may be recovered by an optional post-processing
step.

There are several natural directions for future work. First, the
numerical experiments in \autoref{sec:numerical} focused on hybridized
curl-conforming methods, due to the fact that their stability and
error analysis is already well established. However, as mentioned in
\autoref{sec:hybrid_nedelec}, this framework also includes
constraint-preserving nonconforming methods, which would be
interesting to investigate. Second, we do not yet have a complete
explanation of the hybrid superconvergence phenomenon for
$ \widehat{ H } _h \rightarrow H $; this is the subject of ongoing
work. Finally, the techniques developed here might be applied to study
constraint preservation in other families of hybrid methods,
particularly hybridizable discontinuous Galerkin (HDG) methods.

\subsection*{Acknowledgments}

Ari Stern acknowledges the support of the National Science Foundation
(DMS-1913272) and the Simons Foundation (\#279968). Yakov
Berchenko--Kogan was supported by an AMS--Simons Travel Grant.

\footnotesize


\begin{thebibliography}{32}
\providecommand{\natexlab}[1]{#1}

\bibitem[{Aln{\ae}s et~al.(2015)Aln{\ae}s, Blechta, Hake, Johansson, Kehlet,
  Logg, Richardson, Ring, Rognes, and Wells}]{AlBlHaJoKeLoRiRiRoWe2015}
\textsc{M.~S. Aln{\ae}s, J.~Blechta, J.~Hake, A.~Johansson, B.~Kehlet, A.~Logg,
  C.~Richardson, J.~Ring, M.~E. Rognes, and G.~N. Wells}, \emph{The {FEniCS}
  project version 1.5}, Archive of Numerical Software, 3 (2015).

\bibitem[{Arnold and Brezzi(1985)}]{ArBr1985}
\textsc{D.~N. Arnold and F.~Brezzi}, \emph{Mixed and nonconforming finite
  element methods: implementation, postprocessing and error estimates}, RAIRO
  Mod\'{e}l. Math. Anal. Num\'{e}r., 19 (1985), pp. 7--32.

\bibitem[{Arnold et~al.(2010)Arnold, Falk, and Winther}]{ArFaWi2010}
\textsc{D.~N. Arnold, R.~S. Falk, and R.~Winther}, \emph{Finite element
  exterior calculus: from {H}odge theory to numerical stability}, Bull. Amer.
  Math. Soc. (N.S.), 47 (2010), pp. 281--354.

\bibitem[{Assous et~al.(1993)Assous, Degond, Heintze, Raviart, and
  Segre}]{AsDeHeRaSe1993}
\textsc{F.~Assous, P.~Degond, E.~Heintze, P.-A. Raviart, and J.~Segre},
  \emph{On a finite-element method for solving the three-dimensional {M}axwell
  equations}, J. Comput. Phys., 109 (1993), pp. 222--237.

\bibitem[{Boffi(2006)}]{Boffi2006}
\textsc{D.~Boffi}, \emph{Compatible discretizations for eigenvalue problems},
  in Compatible spatial discretizations, vol. 142 of IMA Vol. Math. Appl.,
  Springer, New York, 2006, pp. 121--142.

\bibitem[{Boffi(2007)}]{Boffi2007}
\leavevmode\vrule height 2pt depth -1.6pt width 23pt, \emph{Approximation of
  eigenvalues in mixed form, discrete compactness property, and application to
  {$hp$} mixed finite elements}, Comput. Methods Appl. Mech. Engrg., 196
  (2007), pp. 3672--3681.

\bibitem[{Boffi et~al.(2013)Boffi, Brezzi, and Fortin}]{BoBrFo2013}
\textsc{D.~Boffi, F.~Brezzi, and M.~Fortin}, \emph{Mixed finite element methods
  and applications}, vol.~44 of Springer Series in Computational Mathematics,
  Springer, Heidelberg, 2013.

\bibitem[{Brenner et~al.(2007)Brenner, Li, and Sung}]{BrLiSu2007}
\textsc{S.~C. Brenner, F.~Li, and L.-Y. Sung}, \emph{A locally divergence-free
  nonconforming finite element method for the time-harmonic {M}axwell
  equations}, Math. Comp., 76 (2007), pp. 573--595.

\bibitem[{Brenner et~al.(2008)Brenner, Li, and Sung}]{BrLiSu2008}
\leavevmode\vrule height 2pt depth -1.6pt width 23pt, \emph{A locally
  divergence-free interior penalty method for two-dimensional curl-curl
  problems}, SIAM J. Numer. Anal., 46 (2008), pp. 1190--1211.

\bibitem[{Brenner et~al.(2009)Brenner, Li, and Sung}]{BrLiSu2009}
\textsc{S.~C. Brenner, F.~Li, and L.-y. Sung}, \emph{Nonconforming {M}axwell
  eigensolvers}, J. Sci. Comput., 40 (2009), pp. 51--85.

\bibitem[{Brezzi et~al.(1985)Brezzi, Douglas, and Marini}]{BrDoMa1985}
\textsc{F.~Brezzi, J.~Douglas, Jr., and L.~D. Marini}, \emph{Two families of
  mixed finite elements for second order elliptic problems}, Numer. Math., 47
  (1985), pp. 217--235.

\bibitem[{Brezzi and Fortin(1991)}]{BrFo1991}
\textsc{F.~Brezzi and M.~Fortin}, \emph{Mixed and hybrid finite element
  methods}, vol.~15 of Springer Series in Computational Mathematics,
  Springer-Verlag, New York, 1991.

\bibitem[{Chen et~al.(2000)Chen, Du, and Zou}]{ChDuZo2000}
\textsc{Z.~Chen, Q.~Du, and J.~Zou}, \emph{Finite element methods with matching
  and nonmatching meshes for {M}axwell equations with discontinuous
  coefficients}, SIAM J. Numer. Anal., 37 (2000), pp. 1542--1570.

\bibitem[{Christiansen(2005)}]{Christiansen2005}
\textsc{S.~H. Christiansen}, \emph{A div-curl lemma for edge elements}, SIAM J.
  Numer. Anal., 43 (2005), pp. 116--126 (electronic).

\bibitem[{Christiansen and Winther(2006)}]{ChWi2006}
\textsc{S.~H. Christiansen and R.~Winther}, \emph{On constraint preservation in
  numerical simulations of {Y}ang-{M}ills equations}, SIAM J. Sci. Comput., 28
  (2006), pp. 75--101.

\bibitem[{Cockburn et~al.(2004)Cockburn, Li, and Shu}]{CoLiSh2004}
\textsc{B.~Cockburn, F.~Li, and C.-W. Shu}, \emph{Locally divergence-free
  discontinuous {G}alerkin methods for the {M}axwell equations}, J. Comput.
  Phys., 194 (2004), pp. 588--610.

\bibitem[{Hairer et~al.(2003)Hairer, Lubich, and Wanner}]{HaLuWa2003}
\textsc{E.~Hairer, C.~Lubich, and G.~Wanner}, \emph{Geometric numerical
  integration illustrated by the {S}t\"ormer-{V}erlet method}, Acta Numer., 12
  (2003), pp. 399--450.

\bibitem[{Hairer et~al.(2006)Hairer, Lubich, and Wanner}]{HaLuWa2006}
\leavevmode\vrule height 2pt depth -1.6pt width 23pt, \emph{Geometric numerical
  integration}, vol.~31 of Springer Series in Computational Mathematics,
  Springer-Verlag, Berlin, second ed., 2006. Structure-preserving algorithms
  for ordinary differential equations.

\bibitem[{Hiptmair(2002)}]{Hiptmair2002}
\textsc{R.~Hiptmair}, \emph{Finite elements in computational electromagnetism},
  Acta Numer., 11 (2002), pp. 237--339.

\bibitem[{Houston et~al.(2004)Houston, Perugia, and Sch\"{o}tzau}]{HoPeSc2004}
\textsc{P.~Houston, I.~Perugia, and D.~Sch\"{o}tzau}, \emph{Mixed discontinuous
  {G}alerkin approximation of the {M}axwell operator}, SIAM J. Numer. Anal., 42
  (2004), pp. 434--459.

\bibitem[{Jin(2014)}]{Jin2014}
\textsc{J.-M. Jin}, \emph{The Finite Element Method in Electromagnetics},
  Wiley-IEEE Press, third ed., 2014.

\bibitem[{Kikuchi(1989)}]{Kikuchi1989}
\textsc{F.~Kikuchi}, \emph{On a discrete compactness property for the
  {N}\'{e}d\'{e}lec finite elements}, J. Fac. Sci. Univ. Tokyo Sect. IA Math.,
  36 (1989), pp. 479--490.

\bibitem[{Logg et~al.(2012)Logg, Mardal, Wells et~al.}]{LoMaWe2012}
\textsc{A.~Logg, K.-A. Mardal, G.~N. Wells, et~al.}, \emph{Automated Solution
  of Differential Equations by the Finite Element Method}, Springer, 2012.

\bibitem[{Marsden and Ratiu(1999)}]{MaRa1999}
\textsc{J.~E. Marsden and T.~S. Ratiu}, \emph{Introduction to mechanics and
  symmetry}, vol.~17 of Texts in Applied Mathematics, Springer-Verlag, New
  York, second ed., 1999. A basic exposition of classical mechanical systems.

\bibitem[{Monk(1991)}]{Monk1991}
\textsc{P.~Monk}, \emph{A mixed method for approximating {M}axwell's
  equations}, SIAM J. Numer. Anal., 28 (1991), pp. 1610--1634.

\bibitem[{Monk(1992)}]{Monk1992}
\leavevmode\vrule height 2pt depth -1.6pt width 23pt, \emph{Analysis of a
  finite element method for {M}axwell's equations}, SIAM J. Numer. Anal., 29
  (1992), pp. 714--729.

\bibitem[{Monk(1993)}]{Monk1993}
\leavevmode\vrule height 2pt depth -1.6pt width 23pt, \emph{An analysis of
  {N}\'ed\'elec's method for the spatial discretization of {M}axwell's
  equations}, J. Comput. Appl. Math., 47 (1993), pp. 101--121.

\bibitem[{Monk(2003)}]{Monk2003}
\leavevmode\vrule height 2pt depth -1.6pt width 23pt, \emph{Finite element
  methods for {M}axwell's equations}, Numerical Mathematics and Scientific
  Computation, Oxford University Press, New York, 2003.

\bibitem[{N{\'e}d{\'e}lec(1980)}]{Nedelec1980}
\textsc{J.-C. N{\'e}d{\'e}lec}, \emph{Mixed finite elements in
  {$\mathbb{R}^{3}$}}, Numer. Math., 35 (1980), pp. 315--341.

\bibitem[{N{\'e}d{\'e}lec(1986)}]{Nedelec1986}
\leavevmode\vrule height 2pt depth -1.6pt width 23pt, \emph{A new family of
  mixed finite elements in {$\mathbb{R}^{3}$}}, Numer. Math., 50 (1986), pp.
  57--81.

\bibitem[{Raviart and Thomas(1977)}]{RaTh1977}
\textsc{P.-A. Raviart and J.~M. Thomas}, \emph{A mixed finite element method
  for 2nd order elliptic problems}, in Mathematical aspects of finite element
  methods ({P}roc. {C}onf., {C}onsiglio {N}az. delle {R}icerche ({C}.{N}.{R}.),
  {R}ome, 1975), Springer, Berlin, 1977, pp. 292--315. Lecture Notes in Math.,
  Vol. 606.

\bibitem[{Yee(1966)}]{Yee1966}
\textsc{K.~S. Yee}, \emph{Numerical solution of inital boundary value problems
  involving {M}axwell's equations in isotropic media}, IEEE Trans. Ant. Prop.,
  14 (1966), pp. 302--307.

\end{thebibliography}
\end{document}